\newtheorem{theorem}{Theorem}
\newtheorem{lemma}[theorem]{Lemma}
\newtheorem{proposition}[theorem]{Proposition}
\newtheorem{corollary}[theorem]{Corollary}
\newenvironment{proof}[1][Proof]{\begin{trivlist}
\item[\hskip \labelsep {\bfseries #1}]}{\end{trivlist}}
\newcommand{\qed}{\hfill\ensuremath{\square}}
\newcommand*\conj[1]{\overline{#1}}
\title{Modified Energy Functionals and the NLS Approximation}
\author{Patrick Cummings, C. Eugene Wayne}
\date{}                                           
\begin{document}
\maketitle

\begin{abstract}
We consider a model equation from \cite{CEW} that captures important properties of the water wave equation. We give a new proof of the fact that wave packet solutions of this equation are approximated by the nonlinear Schr\"{o}dinger equation. This proof both simplifies and strengthens the results of \cite{CEW} so that the approximation holds for the full interval of existence of the approximate NLS solution rather than just a subinterval. Furthermore, the proof avoids the problems associated with inverting the normal form transform in \cite{CEW} by working with a modified energy functional motivated by \cite{Craig} and \cite{Hunter}.
\end{abstract}

\section{Introduction}

The 2D water wave problem in the case of finite depth and no surface tension studies the irrotational flow of a homogeneous, inviscid, irrotational fluid in a canal of finite depth and infinite length under the effects of gravity. It has been shown that under these assumptions the evolution of the system is determined by the evolution of the position of the surface and the horizontal velocity at the surface.

It was shown in \cite{Dull} that there exist small solutions that can be approximated by the ansatz 
\begin{equation}
\epsilon\Psi_{NLS} = \epsilon A(\epsilon (x+c_g t),\epsilon^2 t)e^{i(k_0x+\omega_0 t)} \phi(k_0) \label{psidef}\end{equation} 
 where $A$ is a solution of the NLS equation 
\begin{equation} \partial_T A = i \nu_1 \partial_X^2 A + i \nu_2 A |A|^2.  \label{NLS}\end{equation}
The fact that the NLS equation should approximate the evolution of wave packets on a fluid surface was first predicted by V.E. Zakharov in \cite{zakharov}. This equation describes slow modulations in time and space of the temporally and spatially oscillating wave train $e^{i(k_0x+\omega_0t)}.$  We have the coefficients $\nu_j = \nu_j(k_0) \in \mathbb{R},$ $A(X,T) \in \mathbb{C}$ is the complex-value amplitude,  $0 < \epsilon \ll 1$ is a small perturbation parameter, and $\phi(k_0) \in \mathbb{C}^2$ an eigenvector for the linearized water wave equation. The slow spatial scale is $X=\epsilon (x+c_gt) \in \mathbb{R}$ and the slow time scale is $T=\epsilon^2 t.$ The basic spatial wave number $k_0$ and the basic temporal wave number $\omega_0$ are related via the linear dispersion relation of the water wave problem \[ \omega(k)^2 = k\tanh(k) .  \] 
Finally, the group velocity $c_g$ is given by $\partial_k\omega |_{k=k_0,\omega=\omega_0}.$

In this paper we consider the second-order equation
\begin{equation}\label{PDE} \partial_t^2 u = - \omega^2 u  - \omega^2 (u^2)\end{equation}
 with \(\widehat{\omega u} = \text{sgn} (k)\sqrt{k \tanh(k)}\cdot\widehat{u}(k).\) This equation was studied in \cite{CEW} as a model problem of the 2D water wave problem. By choosing $\omega$ this way, the model problem and the 2D water wave problem described above have the same linear dispersion relation. They also have many of the same difficulties that arise when proving the validity of the NLS approximation. Those difficulties are: a quasilinear equation, a quadratic nonlinearity, the trivial resonance at the wave number $k=0,$ and the nontrivial resonance at $k=k_0.$

The essential difference and advantage in this model problem is the relative simplicity of the linear and nonlinear term. In the case of the water wave problem, both terms are much more involved and include the Dirichlet-Neumann operator. As in Schneider and Wayne \cite{CEW} and D\"{u}ll et al \cite{Dull}, the goal of the model problem is to present a method which can then be used again on the 2D water wave problem. In that light, we wish to show that the model equation (\ref{PDE}) can be approximated by the ansatz (\ref{psidef}). 

On the basis of the form of the ansatz one expects an approximation result to hold for times $\mathcal{O}(\epsilon^{-2}).$ While \cite{CEW} and subsequently \cite{Dull} gave a result that held on the correct qualitative time interval, the result was linked to a specific time $T_1/\epsilon^2.$ One would like an arbitrary time $T_0/\epsilon^2$ with $T_0$ coming from the approximation $\Psi_{NLS}.$ Unlike the qualitative interval, if an arbitrary time can be proven, then we will be able to increase the interval for the approximation by increasing the interval of existence for $\Psi_{NLS}.$ The new result holds in this kind of time interval. We further show that there is an open ball of solutions of (\ref{PDE}) that start close to the ansatz (\ref{psidef}), remain close to it for the relevant time interval, and are unique. Moreover, this result is done in $H^2$ as opposed to the analytic norms used in \cite{CEW}. 

The improvements to this approximation result are the result of incorporating new ideas recently put forth in the literature. In particular, we use the modified energy method from Hunter et al in \cite{Hunter} and another modified energy result from Craig in \cite{Craig}. We also use some of the space-time resonance method of Germain-Masmoudi-Shatah introduced in \cite{Germain}. The first use of Hunter's modified energy method to prove an approximation result of the type discussed here is due to D\"{u}ll \cite{Dull16}, who studied a quasilinear wave equation in a case with no resonances. Very recently, D\"{u}ll and He{\ss} \cite{DullHeb} used the same method to study a different quasilinear dispersive equation with non-trivial resonances similar to those encountered in our equation. 

Before stating our result, we define the operator $\Omega$ by $\widehat{\Omega u} (k) = i \widehat{\omega u} (k).$ This operator $\Omega$ has two properties we wish to take advantage of. The first is that $\Omega$ is anti-symmetric, second, is that if $u$ is real, then $\Omega u$ is real as well. The fact that $\Omega$ preserves the real-valuedness of functions will be pivotal to many of the cancellations used throughout this paper.

With this new operator we can rewrite (\ref{PDE}) as the first order system

\begin{equation}\label{system} \partial_t \begin{pmatrix} u \\ v \end{pmatrix} =  \begin{pmatrix} 0 & \Omega \\ \Omega & 0 \end{pmatrix}  \begin{pmatrix} u \\ v \end{pmatrix} + \begin{pmatrix} 0  \\  \Omega(u^2) \end{pmatrix} .   \end{equation}

Using this notation we have the following theorem.

\begin{theorem} \label{eq:main}
For all $k_0 > 0$ and for all $C_1,T_0 > 0$ there exist $C_2 > 0, \epsilon_0> 0 $ such that for all solutions $A \in C([0,T_0],H^6(\mathbb{R},\mathbb{C}))$ of the NLS equation (\ref{NLS}) with 
\[   \sup_{T \in [0,T_0]}   \|A(\cdot, T)\|_{H^6}  \leq C_1 \] the following holds. For all $\epsilon \in (0,\epsilon_0)$ there exists an open set of initial conditions $f,g \in H^2$ such that
there exists a unique solution of (\ref{system}) which satisfies 
\[\begin{aligned}
& \sup_{t \in [0,T_0/\epsilon^2]}\left\| \begin{pmatrix}u\\v \end{pmatrix}(\cdot, t) - \epsilon \Psi_{NLS}(\cdot,t) \right\|_{(C_b^0(\mathbb{R},\mathbb{R}))^2} \leq C_2 \epsilon^{3/2} , \\
&\hspace{.2 in} u(x,0) =\epsilon \Psi_{NLS,1}(x,0)+ f(x), \\
&\hspace{.2 in}  v(x,0)=\epsilon \Psi_{NLS,2}(x,0) +g(x), 
\end{aligned}\] 
where $\phi(k_0)$ in the definition of $\epsilon \Psi_{NLS}$ in (\ref{psidef}) can be chosen either as $\begin{pmatrix} 1 \\ 1 \end{pmatrix}$ or $\begin{pmatrix} 1 \\ -1 \end{pmatrix}$
\end{theorem}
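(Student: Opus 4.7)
The plan is to combine a higher-order ansatz construction with a modified-energy argument that sidesteps any normal-form inversion. First, I would build an improved approximation $\epsilon\widetilde\Psi = \epsilon\Psi_{NLS} + \sum_{j=2}^{N} \epsilon^{j}\Psi^{(j)}$ by a multiscale expansion: the corrections $\Psi^{(j)}$ are chosen to kill the Fourier modes of the residual at non-resonant multiples of $k_{0}$ (which is possible because $\omega(jk_{0}) \neq j\omega(k_{0})$ for $|j|\neq 1$), while the resonant modes at $\pm k_{0}$ are cancelled by imposing (\ref{NLS}), which in particular fixes $\nu_{1},\nu_{2}$. For $N$ taken large enough one obtains $\|\text{Res}(\widetilde\Psi)\|_{H^{2}} \leq C\epsilon^{N}$ uniformly on $[0,T_{0}/\epsilon^{2}]$. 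Writing the true solution as $(u,v) = \epsilon\widetilde\Psi + \epsilon^{3/2}R$ and plugging into (\ref{system}) gives an error equation
\[
\partial_{t} R \;=\; \Lambda R \;+\; \epsilon\,\mathcal{L}(\widetilde\Psi)R \;+\; \epsilon^{3/2}\mathcal{N}(R,\widetilde\Psi) \;+\; \epsilon^{-3/2}\text{Res}(\widetilde\Psi),
\]
where $\Lambda$ is the skew-symmetric matrix operator from (\ref{system}) and $\mathcal{L}(\widetilde\Psi)$ is a multiplication-type coupling.

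The core step is then the construction of a modified energy. The naive $H^{2}$ energy $E_{0}(R)$ satisfies $\tfrac{d}{dt}E_{0} \sim \mathcal{O}(\epsilon)\cdot E_{0}$ due to the $\epsilon\mathcal{L}(\widetilde\Psi)R$ coupling, which only gives Gronwall control on an $\mathcal{O}(\epsilon^{-1})$ time interval. Following \cite{Hunter} and \cite{Craig}, I would augment $E_{0}$ by a cubic correction $\epsilon E_{1}(R,\widetilde\Psi) = \epsilon\int \sigma_{1}(\partial_{x})R \cdot \sigma_{2}(\widetilde\Psi,\partial_{x})R\,dx$, choosing the symbols $\sigma_{1},\sigma_{2}$ so that $\tfrac{d}{dt}(\epsilon E_{1})$ exactly cancels the $\mathcal{O}(\epsilon)$ terms in $\tfrac{d}{dt}E_{0}$ at the relevant interaction frequencies. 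The anti-symmetry of $\Omega$ and its preservation of real-valuedness, both highlighted in the excerpt, are decisive here: they convert the dangerous quadratic contributions into exact or near-exact divergences and symmetrize the half-derivative loss produced by $\Omega(u^{2})$ (recall $\Omega$ has symbol of order $\sqrt{|k|}$). The resulting $\mathcal{E}(R) := E_{0}(R) + \epsilon E_{1}(R,\widetilde\Psi)$ is equivalent to $\|R\|_{H^{2}}^{2}$ for $\epsilon$ small and obeys $\tfrac{d}{dt}\mathcal{E}(R) \leq C\epsilon^{2}(\mathcal{E}(R)+1)$.

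Gronwall on $[0,T_{0}/\epsilon^{2}]$ then gives $\mathcal{E}(R(t)) \leq C$, hence $\|R(t)\|_{H^{2}} \leq C$, and the Sobolev embedding $H^{2}\hookrightarrow C_{b}^{0}$ converts this into the claimed $C_{2}\epsilon^{3/2}$ bound in the $C_{b}^{0}$ norm. Local well-posedness in $H^{2}$ for the error equation is standard (skew-adjoint linear part, locally Lipschitz nonlinearity); combined with the a priori bound this yields existence and uniqueness on the full interval, and the open set of admissible data $(f,g)$ is obtained by letting $R(0)$ range over a small $H^{2}$ ball. The two allowed choices $\phi(k_{0})=(1,1)^{T}$ and $(1,-1)^{T}$ merely correspond to the two eigenvalues $\pm\omega(k_{0})$ of the leading $2\times 2$ matrix and run through the argument in parallel.

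I expect the main obstacle to be the explicit design of the cubic correction. The symbols $\sigma_{1},\sigma_{2}$ must cancel the $\mathcal{O}(\epsilon)$ energy flux simultaneously at the trivial resonance $k=0$ and at the non-trivial resonance $k=k_{0}$, without producing small denominators and without undoing the symmetrization that controls the $\sqrt{|\partial_{x}|}$-loss in $\Omega(u^{2})$. Identifying the space-time resonance geometry of \cite{Germain} for these interactions, and using the sign structure afforded by the anti-symmetry of $\Omega$ to turn the would-be bad terms into perfect divergences, is where the bulk of the technical work will lie.
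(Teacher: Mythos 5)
Your overall architecture (improved ansatz, energy equivalent to $\|R\|_{H^2}^2$, Gronwall on $[0,T_0/\epsilon^2]$, Sobolev embedding, local well-posedness plus a priori bound) matches the paper, and your cubic correction $\epsilon E_1$ does correspond to the paper's $E_1$, whose role is to cancel the quasilinear derivative loss (the terms with $\partial_x^2$ on both factors of $R$). But the step you yourself flag as ``the main obstacle'' is precisely the content of the paper's proof, and as you describe it, it cannot be carried out: you propose to choose symbols $\sigma_1,\sigma_2$ so that $\tfrac{d}{dt}(\epsilon E_1)$ \emph{exactly cancels} the $\mathcal{O}(\epsilon)$ flux coming from the coupling $2\epsilon\,\Omega(\Psi_1 R_1)$. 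Cancelling a quadratic-in-$R$ flux by an energy correction quadratic in $R$ is a normal-form construction, and the required symbol necessarily carries the denominator $\phi^j_{mn}(k,\ell)=-\omega_j(k)+\omega_m(k-\ell)+\omega_n(\ell)$, which vanishes on the resonance set $\{k=0\}\cup\{\ell=0\}\cup\{k=\ell\}$; no choice of $\sigma_1,\sigma_2$ avoids these small denominators, which is exactly why the problem is delicate. The paper does \emph{not} remove the $\mathcal{O}(\epsilon)$ terms by a single correction. It uses four separate ingredients, none of which appear in your proposal: (i) the weight $\vartheta$ in the definition of the error, $(u,v)=\epsilon\Psi+\epsilon^\beta\vartheta R$, which creates a transparency at $\ell=0$ and yields the crucial residual bound $\|\vartheta^{-1}\mathrm{Res}(\epsilon\Psi)\|_{H^r}\lesssim\epsilon^5$; (ii) the quadratic (in $F$) correction $E_2$, a \emph{partial} normal form used only on $|k|<\delta$ and only for the least-derivative terms, chosen so that the dangerous terms in (\ref{canceleps2}) cancel identically; (iii) the splitting $G=G^c+\epsilon G^s$ with $\widehat{G}^c$ supported near $\pm k_0$, which removes the $k=\ell$ resonance at the price of an extra power of $\epsilon$; and (iv) integration by parts in time (the time-resonance half of the space--time resonance method) on the remaining $\mathcal{O}(\epsilon)$ terms in the regions $\mathcal{V},\mathcal{W},\mathcal{Z}$, whose boundary terms are absorbed into the energy as $\mathcal{O}(\epsilon)$ corrections. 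In particular the paper never proves a pointwise differential inequality $\tfrac{d}{dt}\mathcal{E}\le C\epsilon^2(1+\mathcal{E})$; it proves a time-integrated estimate with $\mathcal{O}(\epsilon)$ boundary corrections, and only then runs Gronwall. Asserting the differential inequality without these mechanisms leaves the theorem unproved.

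A secondary but real problem is your scaling $(u,v)=\epsilon\widetilde\Psi+\epsilon^{3/2}R$ without the weight. The self-interaction term then contributes a flux of size $\epsilon^{3/2}E^{3/2}$, which cannot be dominated by $\epsilon^2(1+E)+\epsilon^3E^2$ and hence does not close a Gronwall argument on times $\mathcal{O}(\epsilon^{-2})$; the paper takes $\beta=3$ (with the residual improved to $\mathcal{O}(\epsilon^{11/2})$, resp.\ $\mathcal{O}(\epsilon^{5})$ after $\vartheta^{-1}$) precisely so that this term is admissible, and recovers the stated $\epsilon^{3/2}$ (in fact better) accuracy in $C_b^0$ from $\epsilon\|\Psi-\Psi_{NLS}\|_{C_b^0}=\mathcal{O}(\epsilon^2)$ plus $\epsilon^3\|\vartheta R\|_{H^2}=\mathcal{O}(\epsilon^3)$. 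Relatedly, with $A$ only in $H^6$ the residual can be pushed to a fixed finite order, not to an arbitrary $\epsilon^N$; that is harmless for your purposes but the claim as written is too strong.
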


As mentioned above, the first (non-rigorous) derivation of the NLS equation as an approximate equation for the evolution of wave-packets on fluids was due to Zakharov in \cite{zakharov}.  The first rigorous investigation of the NLS equation in the context of water waves was due to Craig, Sulem, and Sulem in \cite{CSS} which established the NLS equation as an approximate equation for water waves, but only over a very short time interval - too short for the characteristic phenomena (e.g. solitons) of NLS to manifest themselves.  A general approach to justifying modulation equations like NLS was developed by Kalyakin in \cite{Kalyakin}, who used normal-forms as well as averaging methods.  However, his results did not extend to the sort of quasi-linear PDE’s considered here.  Much closer in spirit to the present work is the paper of Kirrman, Schneider, and Mielke in \cite{Kirrman} who gave a general approach to justifying NLS approximations and applied it to nonlinear PDEs with cubic nonlinear terms.  This was then extended by Schneider in \cite{Schneider} in the case of quadratic nonlinearities via a normal form method under a non-resonance condition for the nonlinearity.  Further developments lead to the paper \cite{CEW}, which was the motivation for the present work, and its extension by D\"{u}ll, Schneider and Wayne, \cite{Dull},  who proved that the NLS equation could be used to approximate wave packets on the surface of an inviscid, irrotational fluid in a channel of finite depth.  The NLS approximation for water waves on a fluid of infinite depth was treated in the two-dimensional case by Totz and Wu in \cite{Totz} and, more recently by Totz in \cite{Totz3D} for the three-dimensional problem.  Interestingly, the methods needed to treat the cases of finite and infinite depth seem to be quite different.  We hope that the methods of the present paper will extend to the water wave problem as well.

The structure of this paper is as follows. In Section 2 we set up the equations as well as give estimates on the residual. In Section 3 we define the energy and describe the underlying ideas of the paper. In Section 4 we look at the evolution of the energy. In particular, we separate the energy into three pieces, each of which will be dealt with differently. In Section 5, we work with the three pieces and use some of the space-time resonance methods developed by Germain-Shatah-Masmoudi. Finally, in Section 6 we conclude the energy estimates via a Gronwall-type argument. 

\textbf{Notation.} We denote the Fourier transform by $\widehat{u}(k) = \frac{1}{2\pi} \int u(x)e^{-ikx}dx.$ The Sobolev space $H^s$ is equipped with the norm $\|u\|^2_{H^s} = \int (1+|k|^2)^s |\widehat{u}(k)|^2 dk.$ Let $\|u\|_{C^b_0} = \sup_{x \in \mathbb{R}} |u(x)|.$ We also write $A \lesssim B$ if there exists a constant $C,$ independent of $\epsilon,$ such that $A \leq CB.$


\section{Setup equations and Residual Estimates}

At first glance, we should define the error $R$ using\[  \begin{pmatrix} u \\ v \end{pmatrix} = \epsilon \begin{pmatrix} \tilde{\Psi}_1 \\ \tilde{\Psi}_2 \end{pmatrix} +  \epsilon^\beta \begin{pmatrix} R_1 \\ R_2 \end{pmatrix}  \]
where we have written $\tilde{\Psi}$ in place of $\Psi_{NLS}$ for convenience. 
Inserting this approximation into (\ref{system}) we obtain the following equation for $R$
\begin{equation} \partial_t \begin{pmatrix} R_1 \\ R_2 \end{pmatrix}  = \begin{pmatrix} 0 & \Omega \\ \Omega & 0 \end{pmatrix}  \begin{pmatrix} R_1 \\ R_2 \end{pmatrix}  + 2 \epsilon \begin{pmatrix} 0 \\ \Omega(\tilde{\Psi}_1 R_1 ) \end{pmatrix} + \epsilon^\beta \begin{pmatrix} 0 \\ \Omega(R_1^2 ) \end{pmatrix} + \epsilon^{-\beta} \text{ Res}(\epsilon\tilde{\Psi}), \label{approxnotheta} \end{equation}
where we have defined 
\[ \text{Res}(\epsilon \tilde{\Psi}) = -\epsilon\partial_t \begin{pmatrix}\tilde{\Psi}_1 \\ \tilde{\Psi}_2 \end{pmatrix} + \epsilon\begin{pmatrix} 0 & \Omega \\ \Omega & 0 \end{pmatrix} \begin{pmatrix}\tilde{\Psi}_1 \\ \tilde{\Psi}_2 \end{pmatrix} +\epsilon^2\begin{pmatrix} 0 \\ \Omega\left( \tilde{\Psi}_1^2 \right) \end{pmatrix}.  \]

To show that $R$ is small for times of $\mathcal{O}(\epsilon^{-2}),$ we must tackle multiple issues. The first is the quasilinearity. If we look at the evolution of the $H^2$-norm of $R,$ then we see that the quasilinearity of the problem makes it so that the best bound we can get is a multiple of $\|R\|_{H^{5/2}}$ which prevents the estimates from closing. The next issue is the order of $\epsilon.$ Again if we look at the evolution of $\|R\|_{H^2},$ we can modify our ansatz $\Psi$ in such a way that the residual is $\mathcal{O}(\epsilon^{11/2}).$ Then choosing $2 \leq \beta \leq 7/2,$ the third and fourth terms are of high enough order in terms of $\epsilon.$ However, the second term poses the biggest problem. It is only $\mathcal{O}(\epsilon)$ and a direct application of Gronwall's inequality will only control growth for times of $\mathcal{O}(\epsilon^{-1}).$ We will use the method of space-time resonances as well as a modified energy method to overcome this. Both methods will require us to avoid two resonances, one at $k=0$ and one at $k=\pm k_0.$ The former will be bounded using the form of the nonlinearity with a so-called transparency condition and modifications to the energy. For the latter we will use a weight function $\vartheta,$ first introduced in \cite{CEW}, that also takes advantage of the fact that the nonlinearity vanishes near $k=0.$ 

As mentioned above, to bound the residual term in terms of epsilon, we will need to modify the ansatz. In \cite{CEW} it was shown this could be done by adjusting the approximation $\Psi_{NLS}$ with higher order terms. This gives us a new approximation $\Psi = \Psi_{NLS} + \mathcal{O}(\epsilon)$ with many critical properties for the following estimates. We specify in more detail the advantages of $\Psi$ below but refer the reader to \cite{CEW} for the derivation of the higher order terms.

Now we define the weight function $\vartheta$ in Fourier space as 
\begin{equation}
\widehat{\vartheta}(k) = \begin{cases} 
1, & \text{ if } |k| > \delta \\ 
\epsilon + (1-\epsilon) |k|/\delta, & \text{ if } |k| \leq \delta.   \end{cases}
\label{vartheta} \end{equation}

We can then define $R$ using our new ansatz $\Psi$ and $\vartheta$ as  \[  \begin{pmatrix} u \\ v \end{pmatrix} = \epsilon \begin{pmatrix} \Psi_1 \\ \Psi_2 \end{pmatrix} +  \epsilon^\beta \vartheta \begin{pmatrix} R_1 \\ R_2 \end{pmatrix}  \] 
Here we have abused notation slightly by defining $\widehat{\vartheta R } = \widehat{\vartheta}\widehat{R}$ as opposed to writing $\vartheta * R.$ Similar to $\Omega,$ if $R$ is real, then $\vartheta R $ is real as well. Inserting this approximation into (\ref{system}) we obtain the following equation for $R$
\begin{equation} \partial_t \begin{pmatrix} R_1 \\ R_2 \end{pmatrix}  = \begin{pmatrix} 0 & \Omega \\ \Omega & 0 \end{pmatrix}  \begin{pmatrix} R_1 \\ R_2 \end{pmatrix}  + 2 \epsilon \vartheta^{-1}\begin{pmatrix} 0 \\ \Omega(\Psi_1 \vartheta R_1 ) \end{pmatrix} + \epsilon^\beta \vartheta^{-1}\begin{pmatrix} 0 \\ \Omega\left((\vartheta R_1)^2 \right) \end{pmatrix} + \epsilon^{-\beta} \vartheta^{-1}\text{ Res}(\epsilon\Psi). \label{approx} \end{equation}

We diagonalize the system using 
\[ \begin{pmatrix} F_1 \\ F_2 \end{pmatrix} = S \begin{pmatrix} R_1 \\ R_2 \end{pmatrix}, \begin{pmatrix} G_1 \\ G_2 \end{pmatrix} = S \begin{pmatrix} \Psi_1 \\ \Psi_2 \end{pmatrix}, \text{with } S = S^{-1} =\frac{1}{\sqrt{2}}\begin{pmatrix} 1 & 1 \\ 1 & -1\end{pmatrix} . \]
This gives us
\[ \partial_t F = \Lambda F + 2 \epsilon \vartheta^{-1} N(G,\vartheta F) + \epsilon^\beta \vartheta^{-1} N(\vartheta F,\vartheta F) + \epsilon^{-\beta}   \vartheta^{-1} \text{Res}(\epsilon G)  \]
with
 \begin{align}& \Lambda = \begin{pmatrix} \Omega & 0 \\ 0 & -\Omega \end{pmatrix}= \begin{pmatrix} \Omega_1 & 0 \\ 0 & \Omega_2 \end{pmatrix} = \begin{pmatrix} i\omega_1 & 0 \\ 0 & i\omega_2 \end{pmatrix}, \nonumber\\
&\widehat{N}_j(\widehat{U},\widehat{V}) (k)= \sum_{m,n=1}^2 \int \frac{i\omega_j(k)}{\sqrt{2}} \widehat{U_m}(k-\ell)\widehat{V_n}(\ell) d \ell, \label{nonlinear} \end{align}
and \[\hspace{.3 in}\text{Res}(\epsilon G) = S\cdot \text{Res}(\epsilon \Psi) =  -\epsilon\partial_t G + \epsilon\Lambda G + \epsilon^2 N(G,G),\] which will satisfy the same estimates as $\text{Res}(\epsilon\Psi).$

In order to apply the space-time methods, it will sometimes be easier to move into a rotating coordinate frame. We define 
\begin{equation}  \begin{pmatrix} f_1 \\ f_2 \end{pmatrix} = e^{-\Lambda t} \begin{pmatrix} F_1 \\ F_2\end{pmatrix}\text{ and }   \begin{pmatrix} g_1 \\ g_2 \end{pmatrix} = e^{-\Lambda t} \begin{pmatrix} G_1 \\ G_2 \end{pmatrix}.   \label{rotatingeqn}
\end{equation} 

For these variables we have 
\begin{equation}
 \partial_t f =  2 \epsilon\,  e^{-\Lambda t} \vartheta^{-1} N(e^{\Lambda t}  g,e^{\Lambda t} \vartheta f) + \epsilon^\beta\,  e^{-\Lambda t}\vartheta^{-1} N(e^{\Lambda t} \vartheta f,e^{\Lambda t}\vartheta f) + \epsilon^{-\beta}  \, e^{-\Lambda t} \vartheta^{-1} \text{Res}(\epsilon G).
\end{equation}

Note that the $\mathcal{O}(1)$ term no longer appears in this coordinate frame.

We now specify in more detail the properties of the higher order terms in $\Psi.$ In \cite{CEW} and \cite{SchneiderJAF}, it was shown that making two modifications to $\Psi_{NLS}$ did not cause any significant changes to the approximation result. The first modification was the addition of terms of the form
\[ \tilde{\psi}^{j_1}_{j_2,j} = \epsilon^{j_1} A^{j_1}_{j_2,j}(\epsilon(x+c_gt), \epsilon^2t) e^{ij_2(k_0x+\omega_0t)}  \]
where $j_1 \geq 1$ and $j_2 =-4,-3,...,4.$ These terms are of higher order in $\epsilon$ than $\Psi_{NLS}$ and in Fourier space are concentrated near $j_2k_0.$ In fact, these terms, as well as the original $\Psi_{NLS},$ can be cut-off in Fourier space via the new definition
\[  \widehat{\psi}^{j_1}_{j_2,j}(k) = \begin{cases} \widehat{\tilde{\psi}}^{j_1}_{j_2,j}(k) &\text{if } |k-j_2k0| < \delta,\\ 0 & \text{otherwise} \end{cases}   \]
where $\delta>0$ is small but independent of $\epsilon$. Making both of these changes will only change the approximation up to order $\epsilon$ as shown in the following lemma.


\begin{lemma}
(Lemma 5 in \cite{CEW}) Let $A \in C([0,T_0],H^6(\mathbb{R},\mathbb{C}))$ be a solution of the NLS equation (\ref{NLS}) with 
\[   \sup_{T \in [0,T_0]}   \|A(\cdot, T)\|_{H^6}  \leq C_1. \]  Then for all $\sigma,r > 0$ an approximation $\Psi$ exists for all $T \in [0,T_0]$ such that 
\[ \begin{aligned}
\sup_{T \in [0,T_0]} \|\Psi(T)- \Psi_{NLS}(T)\|_{C_b^0} &\leq C\epsilon\\
\sup_{T \in [0,T_0]} \|\mbox{Res}(\epsilon \Psi(T))\|_{Y^2_{\sigma, r}} &\leq C \epsilon^{11/2}.
  \end{aligned} \]
\end{lemma}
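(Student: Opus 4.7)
The plan is to build $\Psi$ by a formal WKB expansion to sufficiently high order so that the residual is of size $\mathcal{O}(\epsilon^{11/2})$, and then to truncate the resulting ansatz in Fourier space so it has the stated support structure without losing accuracy. Writing $\Psi = \Psi_{NLS} + \sum_{j_1\geq 2}\sum_{j_2,j} \psi^{j_1}_{j_2,j}$, I would plug into (\ref{system}), expand the quadratic term $(\Psi_1)^2$ using the narrow-band structure, and collect contributions according to their $\epsilon$-order $j_1$ and carrier frequency $j_2 k_0$. The goal at each $(j_1,j_2)$ is to choose the slow amplitude $A^{j_1}_{j_2,j}(X,T)$ so that the corresponding piece of the residual vanishes.

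First I would treat the non-resonant frequencies $j_2 \notin \{\pm 1\}$. At these frequencies the linear symbol $ij_2\omega_0 + c_g \cdot ij_2 k_0 - \Lambda(j_2 k_0)$ is invertible on the slowly varying amplitude, so each offending residual piece at order $\epsilon^{j_1}$ near $j_2 k_0$ can be killed by an algebraic choice of $A^{j_1}_{j_2,j}$. Each such correction generates new contributions when multiplied against $\Psi_{NLS}$ in the nonlinearity, but these are of strictly higher order in $\epsilon$, so the procedure terminates once I have cancelled every contribution of order $\leq \epsilon^{11/2}$. Second, at the resonant frequencies $j_2 = \pm 1$ the symbol is singular, and the solvability condition on the slow profile must be satisfied. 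At leading order this is exactly the NLS equation (\ref{NLS}), which holds by hypothesis; at higher orders $j_1 \geq 2$ one gets inhomogeneous linear Schr\"odinger equations forced by products of the already-determined lower-order amplitudes, which can be solved on $[0,T_0]$ using the assumed regularity $A \in C([0,T_0],H^6)$.

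Next I would install the Fourier cut-off $\widehat{\psi}^{j_1}_{j_2,j}(k) = \widehat{\tilde\psi}^{j_1}_{j_2,j}(k)\,\mathbf{1}_{|k-j_2 k_0|<\delta}$. Because every $A^{j_1}_{j_2,j}$ inherits high Sobolev regularity from $A \in H^6$, its Fourier transform (rescaled by $\epsilon$) is supported essentially in a window of size $\epsilon$ around $0$, so truncating at a fixed distance $\delta$ discards only tails that are smaller than any power of $\epsilon$. These tails contribute $\mathcal{O}(\epsilon^{N})$ to the residual in $Y^2_{\sigma,r}$, which is absorbed into the $\epsilon^{11/2}$ bound. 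The first conclusion $\|\Psi - \Psi_{NLS}\|_{C_b^0}\leq C\epsilon$ is then immediate, since every added term is $\epsilon^{j_1}$ with $j_1\geq 2$ times an $L^\infty$-bounded amplitude (again via Sobolev embedding from $H^6$), and in fact the correction is $\mathcal{O}(\epsilon^2)$, well within $C\epsilon$.

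The main obstacle is the bookkeeping at the resonant wave number: at each successive order $j_1$ one must solve a linear Schr\"odinger-type equation for $A^{j_1}_{\pm 1,j}$ whose forcing involves derivatives and products of the lower-order profiles, and each such step costs Sobolev regularity. The reason the hypothesis $A \in H^6$ is imposed is precisely to give enough derivatives to carry the hierarchy far enough to reach residual order $\epsilon^{11/2}$. Verifying this derivative count -- i.e., confirming that at no stage do we need more than six $X$-derivatives of $A$ -- is the delicate part and the reason the lemma is quoted from \cite{CEW} rather than reproved here in detail.
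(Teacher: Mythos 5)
First, a structural point: the paper does not actually prove this lemma --- it is imported verbatim as Lemma 5 of \cite{CEW}, and the reader is explicitly referred to that paper for the construction of the higher-order terms. Your sketch follows the same general strategy as that source (a multiple-scales hierarchy of corrections $\epsilon^{j_1}A^{j_1}_{j_2,j}(\epsilon(x+c_gt),\epsilon^2 t)e^{ij_2(k_0x+\omega_0t)}$, algebraic solvability at non-resonant harmonics, inhomogeneous linear Schr\"odinger equations at the resonant carrier, then a Fourier cutoff of width $\delta$ about each $j_2k_0$), so architecturally you are reconstructing the right proof rather than a different one.

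However, two of your quantitative claims are incorrect or unjustified as stated, and they are exactly the points the lemma is about. (i) With only $A\in H^6$, the Fourier tails you discard are \emph{not} $\mathcal{O}(\epsilon^N)$ for every $N$: the transform of $A(\epsilon(x+c_gt),\epsilon^2t)e^{ik_0x}$ is $\epsilon^{-1}\widehat{A}((k-k_0)/\epsilon)$ (up to phases), so the mass outside $|k-k_0|<\delta$ is controlled only by the frequencies $\geq \delta/\epsilon$ of $\widehat{A}$, i.e. by bounds of the type $(\epsilon/\delta)^{6}\|A\|_{H^6}$ --- polynomially small, and this finite gain is precisely where the hypothesis $A\in H^6$ and the final exponent enter; your ``smaller than any power of $\epsilon$'' would require analyticity of $A$. (ii) You never account for the exponent $11/2$, nor for why the $Y^2_{\sigma,r}$ norm, which carries the weight $e^{\sigma|k|}$, is even finite. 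The residual is formally $\mathcal{O}(\epsilon^6)$ in $L^\infty$-type norms; measuring a long-wave profile $B(\epsilon\cdot)$ in an $L^2$-based norm costs a factor $\epsilon^{-1/2}$, which is how $\epsilon^{11/2}$ arises, and the exponential weight is harmless only because the cutoff makes the residual compactly supported in Fourier space. Without these two observations your argument yields only a formal $\mathcal{O}(\epsilon^6)$ statement, not the bound claimed in the lemma. Two smaller slips: in the paper's indexing the corrections start at $j_1\geq 1$ (relative order $\epsilon$, not $\epsilon^2$), and the $C\epsilon$ bound on $\Psi-\Psi_{NLS}$ must also absorb the cutoff applied to $\Psi_{NLS}$ itself; and besides $j_2=\pm1$ one must handle the $j_2=0$ harmonic, where solvability uses the non-resonance of the long-wave speed with $c_g$ (this is the ``trivial resonance at $k=0$'' the paper emphasizes), not mere invertibility of the symbol.
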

The norm above $\| \cdot \|_{Y^p_{\sigma,r}}$ is given by 
\[  \| \widehat{u} \|_{Y^p_{\sigma,r}} = \|e^{\sigma |k|} (1+k^2)^{r/2}\widehat{u}  \|_{L^p}.\] 
In particular, we have the following corollary.

\begin{corollary}
For any $r \geq 0,$ there exists an approximation $\Psi(X,T),$ to lowest order given by (\ref{NLS}) and a constant $ C_r > 0$ such that 
\[ \begin{aligned}
\sup_{T \in [0,T_0]} \|\Psi(T)- \Psi_{NLS}(T)\|_{C_b^0} &\leq C_r\epsilon\\
\sup_{T \in [0,T_0]} \|\text{Res}(\epsilon \Psi(T))\|_{H^r} &\leq C_r \epsilon^{11/2}\\
\sup_{T \in [0,T_0]} \|\vartheta^{-1}\text{Res}(\epsilon \Psi(T))\|_{H^r} &\leq C_r \epsilon^{5}.
  \end{aligned} \]
\end{corollary}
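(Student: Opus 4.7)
The first two bounds follow from the Lemma by norm weakening. The $C_b^0$ bound is exactly the statement of the Lemma. For the $H^r$ bound on the residual, fix any $\sigma > 0$; since $(1+k^2)^{r/2} \leq e^{\sigma|k|}(1+k^2)^{r/2}$ pointwise, Plancherel gives
\[ \|\text{Res}(\epsilon\Psi)\|_{H^r} \leq \|\text{Res}(\epsilon\Psi)\|_{Y^2_{\sigma,r}} \leq C\epsilon^{11/2}. \]

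For the third bound, the plan is to decompose $\widehat{\text{Res}}(\epsilon\Psi)$ in Fourier space according to whether $|k|\geq\delta$ or $|k|<\delta$. On the high-frequency piece $|k|\geq\delta$ we have $\widehat{\vartheta}(k)=1$, so $\vartheta^{-1}$ acts trivially and the bound just established immediately gives a contribution of order $\epsilon^{11/2}\leq\epsilon^5$ to $\|\vartheta^{-1}\text{Res}(\epsilon\Psi)\|_{H^r}$.

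The low-frequency piece $|k|<\delta$ is the delicate step. Only the $j_2=0$ harmonic terms $\tilde\psi^{j_1}_{0,j}$ of $\Psi$ contribute to $\widehat{\text{Res}}$ there, and because these are slow-modulation profiles in $X=\epsilon(x+c_g t)$ their Fourier support is concentrated in a window of width $\mathcal{O}(\epsilon)$ about $k=0$. In that window $\widehat{\vartheta}(k)\geq\epsilon$, so the naive pointwise bound $|\widehat{\vartheta}|^{-1}\leq\epsilon^{-1}$ combined with the $Y^2_{\sigma,r}$ estimate from the Lemma only yields $\epsilon^{9/2}$, half a power short of the claim. The missing $\epsilon^{1/2}$ is supplied by the transparency of the nonlinearity at the origin: the symbol of $\Omega$ vanishes like $|k|$ at $k=0$, and the mean-flow amplitudes $A^{j_1}_{0,j}$ in the construction of $\Psi$ from \cite{CEW} are chosen precisely so that the $j_2=0$ portion of the residual carries one additional power of $\epsilon$ in the $H^r$ norm. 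Combining this improved bound with the pointwise estimate on $\widehat{\vartheta}^{-1}$ yields the $\epsilon^5$ bound on the low-frequency piece, and adding the two pieces completes the proof.

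The chief obstacle is isolating that extra power of $\epsilon$ on the $j_2=0$ part of the residual. This refinement is not visible in the global $Y^2_{\sigma,r}$ bound of the Lemma; it is built into the mean-flow corrections of $\Psi$ in \cite{CEW} precisely to compensate for the $\epsilon^{-1}$ amplification of $\vartheta^{-1}$ at low frequencies. Once that structural fact is recorded, the split described above finishes the argument by elementary Fourier-side bookkeeping.
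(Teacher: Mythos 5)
Your handling of the first two estimates is fine and is exactly what the paper does: the $C_b^0$ bound is inherited verbatim from the Lemma, and $\|\cdot\|_{H^r}\le\|\cdot\|_{Y^2_{\sigma,r}}$ gives the $\epsilon^{11/2}$ bound. The issue is the third estimate, which is the only nontrivial content of the corollary. Your splitting at $|k|=\delta$ and the observation that the high-frequency piece is free (there $\widehat{\vartheta}\equiv 1$) are correct, and you correctly see that the naive bound $\epsilon^{-1}\cdot\epsilon^{11/2}=\epsilon^{9/2}$ falls short. But at the crux you do not give an argument: you assert that the $j_2=0$ portion of the residual "carries one additional power of $\epsilon$ in the $H^r$ norm" because the mean-flow amplitudes in \cite{CEW} were "chosen precisely" for this, and you explicitly concede that this fact is not contained in the Lemma you are allowed to use. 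That is the statement that would need to be proved (or precisely cited); as written the proposal is circular at exactly the point where the extra half power of $\epsilon$ must come from. Your accounting is also internally fuzzy: you need $\epsilon^{1/2}$ and invoke both the vanishing of the symbol of $\Omega$ at $k=0$ and a full extra power of $\epsilon$, without establishing either quantitatively.

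The paper closes this step differently: it quotes Corollary 19 of \cite{CEW}, whose mechanism is that the (modified) residual is actually $\mathcal{O}(\epsilon^6)$ in $L^\infty$ — i.e., the gain comes from the wave-packet concentration of $\widehat{\mbox{Res}}$ (Fourier support of width $\mathcal{O}(\epsilon)$ around the harmonics, so the pointwise size is one power of $\epsilon$ better than the $L^2$-based $\epsilon^{11/2}$ suggests), so that the $\epsilon^{-1}$ amplification of $\widehat{\vartheta}^{-1}$, which is only active on the small region near $k=0$, can be absorbed — not from an improved $H^r$ bound on a mean-flow piece. A further inaccuracy in your sketch: it is not true that only the $j_2=0$ harmonics of $\Psi$ contribute to $\widehat{\mbox{Res}}$ on $|k|<\delta$; the quadratic term $\Omega(\Psi_1^2)$ sends the $\pm k_0$ harmonics into a neighborhood of $k=0$ as well (these are exactly the terms for which the vanishing of $\omega(k)$ at $k=0$ is the relevant transparency), so a correct low-frequency analysis must treat both types of contributions. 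To repair the proposal you should either cite Corollary 19 of \cite{CEW} directly, as the paper does, or carry out the low-frequency estimate term by term using the scaling $\widehat{\psi^{j_1}_{0,j}}(k)=\epsilon^{j_1-1}\widehat{A^{j_1}_{0,j}}(k/\epsilon)$ together with the $\omega(k)$ factor on the nonlinear contributions, rather than asserting the conclusion.
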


This last estimate comes from Corollary 19 in \cite{CEW} and uses the fact that the residual is actually $\mathcal{O}(\epsilon^6)$ in $L^{\infty}.$ We recall that although this requires that we add higher order terms to our approximation, the lowest order terms remain those from $\Psi_{NLS}$ cutoff away from $\pm k_0.$ In particular, the lowest order terms are concentrated around $\pm k_0$ such that we can write
\[\begin{aligned}
 \widehat{\Psi}(k) &= \widehat{\Psi}^c(k) + \epsilon \widehat{\Psi}^s(k) \\
 \widehat{G}(k) &= \widehat{G}^c(k) + \epsilon \widehat{G}^s(k) 
\end{aligned} \]
with \[ \text{supp}(\widehat{\Psi}^c) =\text{supp}(\widehat{G}^c) \subset \left\{ k \;\big|\; |k \pm k_0| < \delta \right\} \] due to the fact that we cut the approximation off in Fourier space. Moreover, the bounds for both parts $\widehat{G}^c$ and $\widehat{G}^s$ do not grow with time.


\section{The Energy}

As mentioned earlier, the two main issues at play are the resonances and the quasilinearity. Since we will deal with both in different ways, it will be easier to analyze the energy if we split it into pieces and work with each separately. In that light, we define the energy as 
\[E = E_0 + E_1 +E_2\]
with 
\[\begin{aligned}
&E_0 = \int  R_1^2 + R_2^2 + (\partial_x^2 R_1)^2  + (\partial_x^2 R_2)^2  \; dx\\
 &E_1 = \int 2\epsilon \Psi_1 ( \partial^2_x R_1 )^2 + 2\epsilon^\beta \vartheta R_1  (\partial^2_x  R_1)^2 \;dx \\  
&E_2 = \sum_{j=1}^2 \int\displaylimits_{|k| < \delta}   \epsilon \conj{\widehat{F}_j} B_j(\widehat{G}^c, \widehat{F})+ \epsilon \widehat{F}_j\conj{ B_j(\widehat{G}^c, \widehat{F})} + 2\epsilon^2\conj{B_j(\widehat{G}^c, \widehat{F})}B_j(\widehat{G}^c, \widehat{F})\; dk.
\end{aligned}\]
We have defined 
\[B_j(\widehat{G}^c, \widehat{F}) = \frac{1}{\sqrt{2}}\sum_{m,n=1}^2 \int \frac{\omega_j(k)}{\phi^{j}_{mn}(k,\ell)}\frac{\widehat{\vartheta}(\ell)}{\widehat{\vartheta}(k)} \widehat{G}_m^c(k-\ell)\widehat{F}_n(\ell) d\ell   \]
and 
\[ \phi^j_{mn}(k,\ell) = -\omega_j(k) + \omega_m(k-\ell) + \omega_n(\ell).\]

The terms in $E_0$ are equivalent to the $H^2$ norm. The terms in $E_1$ are chosen to counteract the effects of the quasilinearity. The evolution of these terms will cancel with the terms with the most derivatives from the $H^2$ norm. This idea comes from W. Craig in \cite{Craig}. The terms in $E_2$ are chosen via the normal-form method. This idea is similar to that used in \cite{Hunter} by Hunter et al. In our case, the bilinear transformation $B$ comes from the transformation that would remove the quadratic terms in (\ref{approx}), first derived in \cite{CEW}. However, unlike the usual idea of the normal-form method, we will only use the transformation $B$ to cancel quadratic terms in a very small region. In particular, we use it to cancel those terms with the least derivatives and only in Fourier space near $k=0.$ This will allow us to avoid complicated problems that come with inverting the full transformation $B.$ For the other terms in that region, as well as those outside we will use the method of space-time resonances.

We note that 
\[\begin{aligned}
 E_1 &= \int 2\epsilon \Psi_1 ( \partial^2_x R_1 )^2 + 2\epsilon^\beta  \vartheta R_1  (\partial^2_x  R_1)^2 \;dx \\ 
& \lesssim \epsilon \|\Psi\|_{L^{\infty}} \|R\|_{H^2}^2 + \epsilon^{\beta}   \|R\|_{H^2}^3.
\end{aligned}\]
Furthermore, by Proposition \ref{prop:modenergy} in the Appendix we know for any $p>2,$ there exists a $\gamma > 0$ such that
\[\begin{aligned}
E_2 &=\sum_{j=1}^2 \int\displaylimits_{|k| < \delta}   \epsilon \conj{\widehat{F}_j} B_j(\widehat{G}^c, \widehat{F})+ \epsilon \widehat{F}_j\conj{ B_j(\widehat{G}^c, \widehat{F})} + 2\epsilon^2\conj{B_j(\widehat{G}^c, \widehat{F})}B_j(\widehat{G}^c, \widehat{F})\; dk \\
& \lesssim \delta^{\gamma} \left(\|\widehat{A}\|_{L^p} + \|\widehat{A}\|_{L^p}^2 \right) \|F\|_{L^2}^2. \end{aligned}\]
Since $\|\widehat{A}\|_{L^p} < C$ for $t \in [0,T_0],$ for $\epsilon, \delta>0$ small enough, we have 
\[ (1-C_1 \epsilon - C_2\delta^{\gamma}) E \leq \|R \|_{H^2}^2 \leq (1+C_1 \epsilon + C_2\delta^{\gamma})E   \]
and so the energy above is equivalent to the $H^2$ norm of $R.$

Since we hope to show that $\|R\|_{H^2} \leq C$ independent of $\epsilon$ for the time interval $ \left[0,\frac{T_0}{\epsilon^2}\right],$ the goal of the rest of the paper will be to bound the evolution of $E$ sufficiently small. We look to show that \[ \partial_t E \lesssim \epsilon^2(1+E) + \epsilon^3E^2.  \] Then an application of Gronwall's inequality will be sufficient to conclude that 
\[ \sup_{t \in [0,T_0/\epsilon^2]} E(t) \leq C.\]
 In that light, we will refer to terms throughout as ``bounded" if they can be bounded by a constant multiple of $\epsilon^2(1+E) + \epsilon^3E^2.$


\section{Evolution of $E$}

We now look at the evolution of our energy. We calculate
\[\begin{aligned} 
\frac{1}{2} \partial_t E_0 &= \int   R_1 \left( \Omega R_2 + \epsilon^{-\beta} \vartheta^{-1}\text{Res}_1 (\epsilon \Psi)\right)  \\
+& R_2 \left( \Omega R_1 + 2\epsilon \vartheta^{-1} \Omega(\Psi_1 \vartheta R_1 ) + \epsilon^{\beta} \vartheta^{-1} \Omega\left((\vartheta R_1)^2\right) + \epsilon^{-\beta}\vartheta^{-1} \text{Res}_2(\epsilon \Psi) \right) \\
+&\partial_x^2 R_1 \left( \Omega \partial_x^2 R_2 + \epsilon^{-\beta} \partial_x^2 \vartheta^{-1} \text{Res}_1 (\epsilon \Psi)\right)  \\
+& \partial_x^2 R_2 \left( \Omega \partial_x^2 R_1 + 2\epsilon \vartheta^{-1} \Omega \partial_x^2(\Psi_1 \vartheta R_1 ) + \epsilon^{\beta}  \vartheta^{-1}\Omega \partial_x^2\left((\vartheta R_1)^2\right) + \epsilon^{-\beta} \vartheta^{-1}\partial_x^2 \text{Res}_2(\epsilon \Psi) \right) \;dx\\
\frac{1}{2} \partial_t E_1 & = \int \epsilon \left( \Omega \Psi_2 - \epsilon^{-1}\text{Res}_1(\epsilon \Psi) \right) (\partial_x^2R_1)^2  \\
+& 2\epsilon \Psi_1 \cdot\partial_x^2 R_1 \left( \Omega\partial_x^2 R_2 + \epsilon^{-\beta} \partial_x^2\vartheta^{-1} \text{Res}_1 (\epsilon \Psi)\right) \\
+& \epsilon^\beta \left( \Omega \vartheta R_2 + \epsilon^{-\beta}  \text{Res}_1 (\epsilon \Psi)\right)  \left(\partial_x^2  R_1\right)^2 \\
+ & 2\epsilon^\beta \vartheta R_1\cdot \partial_x^2 R_1 \left( \Omega \partial_x^2  R_2 + \epsilon^{-\beta} \partial_x^2 \vartheta^{-1} \text{Res}_1 (\epsilon \Psi)\right)\;dx \\
\frac{1}{2} \partial_t E_2 & =\frac{1}{2}\sum_{j=1}^2 \int\displaylimits_{|k| < \delta}    \epsilon \conj{\big(  \Omega_j\widehat{F}_j + 2 \epsilon \widehat{\vartheta}^{-1} \widehat{N}_j(\widehat{G}, \widehat{\vartheta} \widehat{F}) + \epsilon^\beta \widehat{\vartheta}^{-1} \widehat{N}_j (\widehat{\vartheta} \widehat{F}, \widehat{\vartheta} \widehat{F})} \\ 
+&  \conj{ \epsilon^{-\beta} \widehat{\vartheta}^{-1} \widehat{\text{Res} (\epsilon G)}    \big) } B_j(\widehat{G}^c, \widehat{F}) \\
+&   \epsilon \conj{\widehat{F}_j} B_j(\widehat{G}^c, \Lambda \widehat{F} + 2 \epsilon \widehat{\vartheta}^{-1} \widehat{N}(\widehat{G}, \widehat{\vartheta} \widehat{F}) + \epsilon^\beta \widehat{\vartheta}^{-1} \widehat{N} (\widehat{\vartheta} \widehat{F}, \widehat{\vartheta} \widehat{F}) + \epsilon^{-\beta}  \widehat{\vartheta}^{-1}\text{Res} (\epsilon G) ) \\
 + &\epsilon \conj{\widehat{F}_j} B_j(\partial_t \widehat{G}^c, \widehat{F}) \\
+& 2\epsilon^2\conj{B_j(\partial_t \widehat{G}^c, \widehat{F})}B_j(\widehat{G}^c, \widehat{F}) \\
+& 2\epsilon^2\conj{B_j\left( \widehat{G}^c, \Lambda \widehat{F} + 2 \epsilon \widehat{\vartheta}^{-1} \widehat{N}(\widehat{G}, \widehat{\vartheta} \widehat{F}) + \epsilon^\beta \widehat{\vartheta}^{-1} \widehat{N} (\widehat{\vartheta} \widehat{F}, \widehat{\vartheta} \widehat{F}) + \epsilon^{-\beta} \widehat{\vartheta}^{-1} \widehat{\text{Res} (\epsilon G)}  \right)}B_j(\widehat{G}^c, \widehat{F}) \\
+&  c.c. \; dk
\end{aligned}\]

where we write $c.c.$ for complex conjugate. This can be rewritten as

\[\begin{aligned} 
&\frac{1}{2} \partial_t E_0 = \int   R_1 \cdot \Omega R_2 + R_2 \cdot\Omega R_1 + \partial_x^2R_{1}\cdot \Omega \partial_x^2 R_2 + \partial_x^2 R_2\cdot \Omega \partial_x^2 R_1 \; dx\\
&\hspace{.5 in}+2\epsilon \int   R_2 \cdot \vartheta^{-1} \Omega (\Psi_1 \vartheta R_1) + \partial_x^2 R_2 \cdot \vartheta^{-1} \Omega\partial_x^2 (\Psi_1 \vartheta R_1)\;dx \\ 
&\hspace{.5 in} +\epsilon^{\beta}  \int R_2 \cdot \vartheta^{-1}\Omega\left( (\vartheta R_1)^2\right) + \partial_x^2R_2 \cdot \vartheta^{-1} \Omega\partial_x^2\left( ( \vartheta R_1)^2\right)  \;dx \\
\end{aligned}\]
\[\begin{aligned} 
&\frac{1}{2} \partial_t E_1 = 2\epsilon \int  \frac{1}{2} \Omega \Psi_2   (\partial_x^2R_1)^2 +\Psi_1 \cdot\partial_x^2 R_1 \cdot \Omega\partial_x^2 R_2  \;dx \\
&\hspace{.5 in} +\epsilon^{\beta}  \int\Omega \vartheta R_2  \left(\partial_x^2 R_1\right)^2 + 2 \vartheta R_1\cdot \partial_x^2  R_1 \cdot\Omega \partial_x^2  R_2 \;dx\\
&\frac{1}{2}\partial_t E_2 = \sum_{j=1}^2  \frac{1}{2}\epsilon \int\displaylimits_{|k| < \delta} 
\conj{\Omega_j \widehat{F}_j} \cdot B_j(\widehat{G}^c,\widehat{F})    +    \conj{\widehat{F}_j}\cdot B_j( \widehat{G}^c, \Lambda \widehat{F}) \\
&\hspace{1.2 in}+ \conj{\widehat{F}_j} \cdot B_j(\Lambda \widehat{G}^c, \widehat{F}) +  \conj{\widehat{F}_j}\cdot  B_j((\partial_t-\Lambda) \widehat{G}^c, \widehat{F}) \;dk  \\
&\hspace{.5 in}  +\epsilon^2  \int\displaylimits_{|k| < \delta}  \conj{ \widehat{\vartheta}^{-1} N(\widehat{G}, \vartheta \widehat{F})} \cdot B_j(\widehat{G}^c,\widehat{F})  + \conj{\widehat{F}_j} \cdot B_j(\widehat{G}^c,  \widehat{\vartheta}^{-1} N(\widehat{G}, \widehat{\vartheta} \widehat{F}) )\\
&\hspace{1.2 in} + \conj{B_j(\Lambda \widehat{G}^c, \widehat{F})} \cdot B_j(\widehat{G}^c, \widehat{F})+ \conj{B_j(( \partial_t- \Lambda) \widehat{G}^c, \widehat{F})} \cdot B_j(\widehat{G}^c, \widehat{F}) \\
& \hspace{1.2 in} + \conj{B_j( \widehat{G}^c, \Lambda\widehat{F})} \cdot B_j(\widehat{G}^c,  \widehat{F}) \; dk\\
&\hspace{.5 in}+  2\epsilon^3  \int\displaylimits_{|k|< \delta}\conj{B_j( \widehat{G}^c, \widehat{\vartheta}^{-1} N(\widehat{G}, \widehat{\vartheta} \widehat{F})   )}\cdot B_j(\widehat{G}^c, \widehat{F}) \;dk \\
&\hspace{.5 in} + c.c.
\end{aligned}\]

We have dropped terms from these expressions that involve $\text{Res}(\epsilon \Psi)$ and $\text{Res}(\epsilon G)$ as well as terms of order $\epsilon^{2+\beta}$ and higher in the expression for $\partial_t E_2$.  The residual terms can be bounded directly if we restrict $\beta \le 3$.  The higher order terms in $\epsilon$ can also be bounded by $\epsilon^2(1+E) + \epsilon^3E^2$  in a fashion similar to that used in Proposition 10.   At first sight it might appear that the terms of order $\epsilon^2$ and $\epsilon^3$ in the expression for $\partial_t E_2$ would obey similar bounds.  However, the factors of $\widehat{\vartheta}^{-1}$ in the integrand in these terms (either explicitly, or in the definition of $B_j$) are ${\cal O}(\epsilon^{-1})$ for $k \approx 0$, and this means that in fact, these terms are only of ${\cal O}(\epsilon)$ and must be treated more carefully.

For the rest of the terms, first notice that the $\mathcal{O}(1)$ terms cancel using the antisymmetry of $\Omega.$ Then the $\mathcal{O}(\epsilon)$ terms from $E_0$ and $E_1$ are of the form 

\begin{align}
2&\epsilon \int   R_2  \vartheta^{-1} \Omega (\Psi_1 \vartheta R_1) + \partial_x^2 R_2  \vartheta^{-1} \Omega\partial_x^2 (\Psi_1 \vartheta R_1)   +\frac{1}{2} \Omega \Psi_2   (\partial_x^2R_1)^2 +\Psi_1 \partial_x^2 R_1  \Omega\partial_x^2 R_2  \;dx \nonumber \\
=& 2\epsilon \int   R_2  \vartheta^{-1} \Omega (\Psi_1 \vartheta R_1)   +\frac{1}{2} \Omega \Psi_2   (\partial_x^2R_1)^2 + \partial_x^2 R_2  \vartheta^{-1} \Omega\partial_x^2 (\Psi_1 \vartheta R_1)  - \Omega (\Psi_1 \partial_x^2 R_1 ) \partial_x^2 R_2  dx \nonumber\\
 =& 2\epsilon \int   R_2  \vartheta^{-1} \Omega (\Psi_1 \vartheta R_1)   +\frac{1}{2} \Omega \Psi_2   (\partial_x^2R_1)^2 +  \partial_x^2 R_2  \vartheta^{-1} \Omega (\partial_x^2\Psi_1 \vartheta R_1)\nonumber\\
& \hspace{.1 in}+ 2\partial_x^2 R_2  \vartheta^{-1} \Omega (\partial_x\Psi_1 \vartheta \partial_x R_1)  + \partial_x^2 R_2  \vartheta^{-1} \Omega (\Psi_1 \vartheta \partial_x^2R_1) - \partial_x^2 R_2  \Omega (\Psi_1 \cdot\partial_x^2 R_1 )   \;dx. \label{epsterms}
\end{align}

We expand and rewrite the terms in this way to take advantage of a cancellation later. Note that $\partial_x^2 R_2 \cdot \vartheta^{-1} \Omega (\Psi_1 \vartheta \partial_x^2R_1)$ has two derivatives landing on each factor of $R$ as well as the presence of $\Omega$ on one of the factors of $R.$ Since we hope to bound things in terms of our energy, which is equivalent to the $H^2$ norm of $R,$ this term has too many derivatives and so poses the biggest threat. However, without the presence of $\vartheta,$ the last two terms above would cancel each other exactly. This ``cancellation'' is exactly the reason for the addition of $E_1$ to the energy. 

The $\mathcal{O}(\epsilon^\beta)$ terms have a similar expansion
\[\begin{aligned}
&\epsilon^{\beta}  \int R_2 \cdot \vartheta^{-1}\Omega\left( (\vartheta R_1)^2\right) + \partial_x^2R_2 \cdot \vartheta^{-1} \Omega\partial_x^2\left( ( \vartheta R_1)^2\right)+\Omega \vartheta R_2  \left(\partial_x^2 R_1\right)^2 \\
&\hspace{1 in}+ 2 \vartheta R_1\cdot \partial_x^2  R_1 \cdot\Omega \partial_x^2  R_2 \;dx\\
& = \epsilon^{\beta}  \int R_2 \cdot \vartheta^{-1}\Omega\left( (\vartheta R_1)^2\right) +\Omega \vartheta R_2  \left(\partial_x^2 R_1\right)^2\\
& \hspace{1 in}+ 2\partial_x^2R_2 \cdot \vartheta^{-1} \Omega\left(\vartheta R_1\cdot \vartheta\partial_x^2R_1 +  (\vartheta\partial_xR_1)^2\right)  - 2\Omega\left( \vartheta R_1\cdot \partial_x^2  R_1 \right) \partial_x^2  R_2 \;dx\\
& = \epsilon^{\beta}  \int R_2 \cdot \vartheta^{-1}\Omega\left( (\vartheta R_1)^2\right) +\Omega \vartheta R_2  \left(\partial_x^2 R_1\right)^2 + 2\partial_x^2R_2 \cdot \vartheta^{-1} \Omega\left( (\vartheta\partial_xR_1)^2\right)  \\
& \hspace{1 in}+ 2\partial_x^2R_2 \cdot \vartheta^{-1} \Omega\left(\vartheta R_1\cdot \vartheta\partial_x^2R_1\right) - 2\partial_x^2  R_2 \cdot \Omega\left(\vartheta R_1\cdot \partial_x^2 R_1 \right)  \;dx\\
&\lesssim  \epsilon^{\beta} E^{3/2}.
\end{aligned}\]

Above we have used Cauchy-Schwartz and the fact that $H^2$ is a Banach algebra. We also used the fact that the last two terms here exhibit the sort of cancellation alluded to for the $\mathcal{O}(\epsilon)$ terms. In particular, by Plancharel
\[\begin{aligned}
2&\epsilon^\beta \int \conj{\partial_x^2R_2} \cdot \vartheta^{-1} \Omega\left(\vartheta R_1\cdot \vartheta\partial_x^2R_1\right)- \conj{\partial_x^2  R_2}\Omega\left( \vartheta R_1\cdot \partial_x^2  R_1 \right) \;dx\\
& = \sqrt{2}\epsilon^\beta \int \left(\frac{\widehat{\vartheta}(k-\ell)\widehat{\vartheta}(\ell)}{\widehat{\vartheta}(k)} -  \vartheta(k-\ell)\right)\conj{k^2 \widehat{R}_2(k)}   i \omega(k) \widehat{R}_1(k-\ell) \ell^2 \widehat{R}_1(\ell)   \;d\ell dk.
\end{aligned}\]

And so these terms cancel except when $k$ or $\ell$ are small. Then when $k$ or $\ell$ are small, we can bound these two directly by Young's inequality and Cauchy-Schwartz. At first glance it would seem we should have lost a power of $\epsilon$ to the fact that $\widehat{\vartheta}^{-1}(k) \sim \mathcal{O}(\epsilon^{-1})$ for $k \approx 0;$ however, each term with a $\vartheta^{-1}$ also has a term of the form $i\omega(k)$ in Fourier space coming from the nonlinearity. This vanishes at $k=0$ and we therefore do not lose a power of $\epsilon.$ However, to get the necessary bound of $\epsilon^2(1+E) + \epsilon^3E^2$, we will assume that $\beta = 3$ henceforth. 

 We are left with six terms of $\mathcal{O}(\epsilon)$ from (\ref{epsterms}) and all the terms from $E_2.$ Before we begin the space-time approach it will be easiest to analyze the remaining terms in Fourier space. Since each of the terms are real, we can split them in two before using Plancharel's theorem to get

\begin{align}
 &2\epsilon \int   R_2 \cdot \vartheta^{-1} \Omega (\Psi_1 \vartheta R_1)   +\frac{1}{2} \Omega \Psi_2 \cdot  (\partial_x^2R_1)^2 +  \partial_x^2 R_2 \cdot \vartheta^{-1} \Omega (\partial_x^2\Psi_1 \vartheta R_1)\nonumber\\
& \hspace{.1 in}+ 2\partial_x^2 R_2 \cdot \vartheta^{-1} \Omega (\partial_x\Psi_1 \vartheta \partial_x R_1)  + \partial_x^2 R_2 \cdot \vartheta^{-1} \Omega (\Psi_1 \vartheta \partial_x^2R_1) - \partial_x^2 R_2 \cdot \Omega \left(\Psi_1 \cdot\partial_x^2 R_1 \right)   \;dx \nonumber \\
& = \frac{\sqrt{2}}{4} \epsilon \sum^2_{j,m,n=1} \iint \bigg(i\omega_j(k) \frac{\widehat{\vartheta}(\ell)}{\widehat{\vartheta}(k)} +  \frac{1}{2}  k^2 i\omega_m(k-\ell) \ell^2  + i\omega_j(k)k^2 (k-\ell)^2 \frac{\widehat{\vartheta}(\ell)}{\widehat{\vartheta}(k)}        \nonumber \\
&\hspace{.1 in}  +2 i\omega_j(k) k^2 (k-\ell) \ell    \frac{\widehat{\vartheta}(\ell)}{\widehat{\vartheta}(k)}  + i\omega_j(k)k^2 \ell^2    \frac{\widehat{\vartheta}(\ell)}{\widehat{\vartheta}(k)}     - i\omega_j(k)k^2 \ell^2 \bigg) \conj{\widehat{F}_j(k)} \widehat{G}_m(k-\ell)\widehat{F}_n(\ell) d\ell dk \label{epsfourier}
\end{align}
plus complex conjugate terms.


\section{The Space-Time Resonance Approach}

The Space-Time Resonance approach developed by Germain-Masmoudi-Shatah extends Shatah's normal form method \cite{Shatah} and, as pointed out in \cite{Germain1} and \cite{LannesST} is also related to Klainerman's vector-field method. In this paper, we will focus only on the time resonance half of the space-time resonance method.

 Consider the following simplified example
\[  \begin{aligned}
&\partial_t U= \Lambda U + \epsilon N(U,U) \\
&U(t) |_{t=0} = U_0
\end{aligned}\]
with $N(u,u)$ defined as a Fourier multiplier with some kernel function $m_j(k,k-\ell,\ell)$ as was done in (\ref{nonlinear}). We will similarly write $u$ in a rotating coordinate frame as $f = e^{-i\Lambda t}U$ and using Duhamel's formula we can write the Fourier transform of the solution $f$ as 
\begin{equation} \widehat{f}_j(t,k) = \widehat{U}_{j0}(k) + \epsilon \int_0^t \int e^{i\phi^j_{mn}(k,\ell) s}  m_j(k,k-\ell,\ell) \widehat{f}_m(k-\ell,s) \widehat{f}_n(\ell,s) d\ell ds  \label{duhamel} \end{equation}
with
 \[ \phi^j_{mn}(k,\ell) = -\omega_j(k) + \omega_m(k-\ell) + \omega_n(\ell) \]

The main idea of the space-time resonance approach is to then analyze the above using the method of stationary phase for both $s$ and $\ell.$ In the case of the variable $s,$ the phase is stationary when $\phi^j_{mn}(k,\ell)= 0.$ We will call that set the time resonances
\[  \mathcal{T} = \left\{(k,\ell)\; |   \phi^j_{mn}(k,\ell)= 0   \right\}. \] 
This is exactly the set of resonances found when applying Shatah's normal form method. If these resonances do not exist, or if we are integrating in some region bounded away from $\mathcal{T},$ then we can integrate (\ref{duhamel}) by parts with respect to $s$ and get 
\[\begin{aligned} 
 \widehat{f}_j(t,k) =& \widehat{U}_{j0}(k) + \epsilon  \int e^{i\phi^j_{mn}(k,\ell) s} \frac{m_j(k,k-\ell,\ell)}{i\phi^j_{mn}(k,\ell)} \widehat{f}_m(k-\ell,s) \widehat{f}_n(\ell,s) d\ell \bigg|_0^t \\
&- \epsilon \int_0^t \int e^{i\phi^j_{mn}(k,\ell) s}  \frac{m_j(k,k-\ell,\ell)}{i\phi^j_{mn}(k,\ell)} \partial_s\left( \widehat{f}_m(k-\ell,s) \widehat{f}_n(\ell,s) \right) d\ell ds.
\end{aligned}\]
The first integrand is small and no longer grows with respect to $t,$ the second is cubic in $f$ and $\mathcal{O}(\epsilon^2)$ when the time derivative is applied to either factor of $f.$

\subsection{Applying the Space-Time Resonance Approach}

In our case, we will need to avoid the set of time resonances, where \[ \phi^j_{mn}(k,\ell) = -\omega_j(k) + \omega_m(k-\ell) + \omega_n(\ell)=0 \]
 in the $k\ell$-plane. That is 
\[ \mathcal{T} = \left\{(k,\ell)\; | \; k=0, \; \ell = 0, \text{ or } k= \ell    \right\} . \] 

Fortunately, the region $k=0$ will be dealt with via a transparency condition for most terms and the addition of $E_2$ for the other. Also using the rescaling $\vartheta$ we have created a further transparency condition at $\ell=0.$ For $k=\ell$, recalling that
\[ \widehat{G}(k) = \widehat{G}^c(k) + \epsilon \widehat{G}^s(k),  \]
we can use the extra order of $\epsilon$ in order to bound $\partial_t E$ near $k=\ell$ without using any space-time methods to begin with.

In particular, without splitting up $G,$ each term in (\ref{epsfourier}) is bounded by $C\epsilon E.$ To see this we use the fact that $\omega(k)$ vanishes at zero to bound $\omega(k)/\widehat{\vartheta}(k)$ near $k=0$ and then Young's inequality and Cauchy-Schwartz. Therefore, if we split up $G,$ the terms with $G^s$ will be bounded by $C\epsilon^2E$ as a result of the extra order of $\epsilon$ in front of this factor. Thus we only need to use the space-time resonance approach in the region $\text{supp}(\widehat{G}^c(k-\ell)).$ This allows us to avoid the region where $k= \ell$ completely.

Since each of the aforementioned regions will be dealt with differently, we define the regions 
\[ \mathcal{W} = \left\{  (k,\ell) \in \text{supp}(\widehat{G}^c(k-\ell)) \; \big| \; |\ell|< \delta      \right\} ,  \]  
\[ \mathcal{Z} = \left\{  (k,\ell) \in \text{supp}(\widehat{G}^c(k-\ell)) \; \big| \; |k|< \delta      \right\},   \] 
and 
\[ \mathcal{V} = \text{supp}(\widehat{G}^c(k-\ell)) \backslash (\mathcal{W} \cup \mathcal{Z}) .\]

These regions can all be seen in the plot below.

\begin{figure} [h!] \label{fig:plot}
	\centering
		\includegraphics[scale=.25]{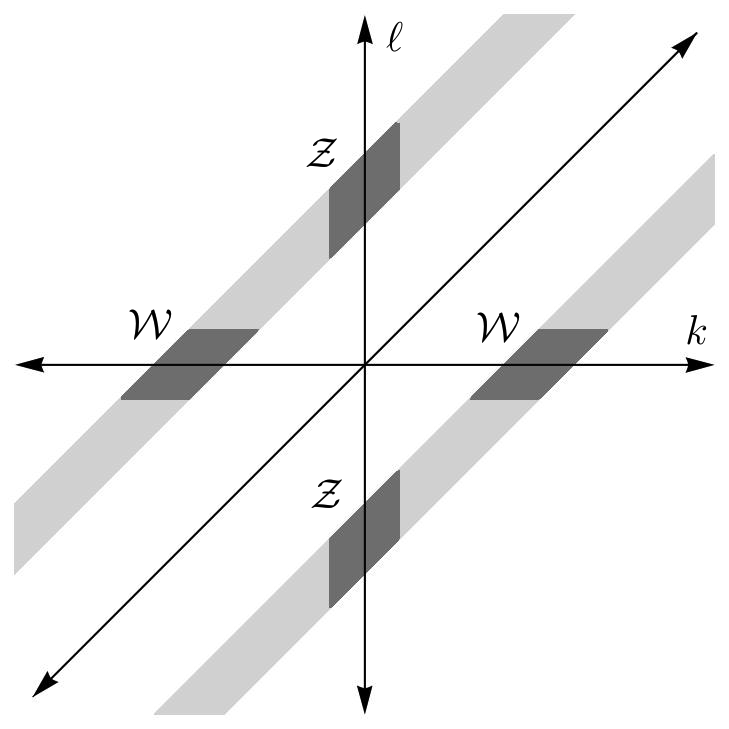}
		\caption{Partition of $k\ell$-plane.}
\end{figure}

We now bound each region separately.


\subsection{In the region $\mathcal{V}$}

Over the region $\mathcal{V}$ we have the advantage that $\widehat{\vartheta}^{-1}(k) = \widehat{\vartheta}(\ell)=1.$  Fortunately, this means that the last two terms from (\ref{epsfourier}), those with the most derivatives falling on $F,$ will both cancel.  We integrate the other four terms with respect to $t$ and adopt the notation used in (\ref{rotatingeqn}). This gives us equations of the form
\begin{equation}
\epsilon \sum_{j,m,n=1}^2 \int_0^t\iint_{\mathcal{V}} \widehat{a} ( k,k-\ell, \ell) e^{i \phi^j_{mn}(k,\ell)s}  \conj{\widehat{f}_j(k)} \widehat{g}^c_m(k-\ell) \widehat{f}_n(\ell) d\ell dk ds \label{STV}
\end{equation}
with the following kernel functions $\widehat{a}(k,k-\ell,\ell)$ coming from (\ref{epsfourier}) 
\[ i\omega_j(k), \; \frac{1}{2}  k^2 i\omega_m(k-\ell) \ell^2,\; i\omega_j(k)k^2 (k-\ell)^2 ,\text{ and } \; 2 i\omega_j(k) k^2 (k-\ell) \ell . \]
We note that there are similar complex conjugate terms that can be dealt with in the same manner.
 We integrate by parts with respect to $s$ giving us
\begin{align}
&\epsilon \sum_{j,m,n=1}^2  \iint_{\mathcal{V}}  \frac{\widehat{a} ( k,k-\ell, \ell)}{i \phi^j_{mn}(k,\ell)}e^{i \phi^j_{mn}(k,\ell)s}  \conj{\widehat{f}_j(k)} \widehat{g}^c_m(k-\ell) \widehat{f}_n(\ell) d\ell dk \bigg|_0^t\label{boundary1} \\  
&-\epsilon \sum_{j,m,n=1}^2  \int_0^t\iint_{\mathcal{V}}  \frac{\widehat{a} ( k,k-\ell, \ell)}{i \phi^j_{mn}(k,\ell)}e^{i \phi^j_{mn}(k,\ell)s}  \partial_s\left(\conj{\widehat{f}_j(k)} \widehat{g}^c_m(k-\ell) \widehat{f}_n(\ell)\right) d\ell dk ds. \label{IBPt}
\end{align}

Since $\mathcal{T} \cap \mathcal{V}  = \emptyset,$ the boundary term (\ref{boundary1}) is well-defined and bounded by $C\epsilon E.$ Thus we can subtract it from the left-hand side of our estimate. In essence, by making this subtraction we are modifying the energy by this boundary term. However, since it is bounded by $C\epsilon E,$ this does not change the energy in a significant way. 

For (\ref{IBPt}), we will focus on one term. The rest are similar. Ignoring the summation for the moment, we have

\begin{align}
\epsilon &  \int_0^t\iint_{\mathcal{V} }  \frac{\widehat{a} ( k,k-\ell, \ell)}{i \phi^j_{mn}(k,\ell)}e^{i \phi^j_{mn}(k,\ell)s}  \partial_s\left(\conj{\widehat{f}_j(k)}\right) \widehat{g}^c_m(k-\ell) \widehat{f}_n(\ell) d\ell dk ds \nonumber \\
& = 2\epsilon^2   \int_0^t\iint_{\mathcal{V} }  \frac{\widehat{a} ( k,k-\ell, \ell)}{i \phi^j_{mn}(k,\ell)}e^{i (\omega_m(k-\ell) + \omega_n(\ell))s}  \conj{\frac{1}{\widehat{\vartheta}(k)}\widehat{N}_j(e^{i\Lambda s}\widehat{g},e^{i\Lambda s}\widehat{\vartheta}\widehat{f})(k)} \widehat{g}^c_m(k-\ell) \widehat{f}_n(\ell) d\ell dk ds \nonumber\\
& + \epsilon^{\beta+1}   \int_0^t\iint_{\mathcal{V} }  \frac{\widehat{a} ( k,k-\ell, \ell)}{i \phi^j_{mn}(k,\ell)}e^{i  (\omega_m(k-\ell) + \omega_n(\ell))s}  \conj{\frac{1}{\widehat{\vartheta}(k)}\widehat{N}_j(e^{i\Lambda s}\widehat{\vartheta}\widehat{f},e^{i\Lambda s}\widehat{\vartheta}\widehat{f})(k)} \widehat{g}^c_m(k-\ell) \widehat{f}_n(\ell) d\ell dk ds \nonumber\\
& + \epsilon^{1-\beta}   \int_0^t\iint_{\mathcal{V} }  \frac{\widehat{a} ( k,k-\ell, \ell)}{i \phi^j_{mn}(k,\ell)}e^{i  (\omega_m(k-\ell) + \omega_n(\ell))s} \conj{e^{i\omega_j(k)s}\frac{1}{\widehat{\vartheta}(k)} \text{Res}(\epsilon \widehat{G})} \widehat{g}^c_m(k-\ell) \widehat{f}_n(\ell) d\ell dk ds  \nonumber\\
& = 2\epsilon^2   \int_0^t\iint_{\mathcal{V} }  \frac{\widehat{a} ( k,k-\ell, \ell)}{i \phi^j_{mn}(k,\ell)} \conj{\widehat{N}_j(\widehat{G},\widehat{\vartheta}\widehat{F})(k)} \widehat{G}_m^c(k-\ell) \widehat{F}_n(\ell) d\ell dk ds \label{gfnon} \\
& + \epsilon^{\beta+1}  \int_0^t\iint_{\mathcal{V} }  \frac{\widehat{a} ( k,k-\ell, \ell)}{i \phi^j_{mn}(k,\ell)} \conj{\widehat{N}_j(\widehat{\vartheta}\widehat{F},\widehat{\vartheta}\widehat{F})(k)} \widehat{G}_m^c(k-\ell) \widehat{F}_n(\ell) d\ell dk ds \label{ffnon}\\
& + \epsilon^{1-\beta}  \int_0^t\iint_{\mathcal{V} }  \frac{\widehat{a} ( k,k-\ell, \ell)}{i \phi^j_{mn}(k,\ell)} \conj{\text{Res}(\epsilon \widehat{G})} \widehat{G}_m^c(k-\ell) \widehat{F}_n(\ell) d\ell dk ds \label{resnon}
\end{align}
where we dropped the $1/\widehat{\vartheta}(k)$ in the second equality because the integral is over $\mathcal{V}$ where this function is equal to $1.$
The residual term (\ref{resnon}) can be bounded directly by $C\epsilon^2 E^{1/2}$ using Young's inequality and Cauchy-Schwartz. Furthermore, for (\ref{gfnon}) and (\ref{ffnon}), all of the kernel functions, with the exception of \[\frac{\sqrt{2}}{8}  k^2 i\omega_m(k-\ell) \ell^2 \] can be bounded using integration by parts, which in this context is really writing $k = (k-\ell) + \ell,$ as well as Cauchy-Schwartz and Young's Inequality. For example

\[\begin{aligned}
&  \iint_{\mathcal{V} }  \frac{2i\omega_j(k)k^2(k-\ell)\ell}{i \phi^j_{mn}(k,\ell)} \conj{\widehat{N}_j(\widehat{\vartheta}\widehat{F},\widehat{\vartheta}\widehat{F})(k)} \widehat{G}_m^c(k-\ell) \widehat{F}_n(\ell) d\ell dk  \\
&= - \iiint_{\mathbb{R} \times \mathcal{V} }  \frac{\sqrt{2}i\omega_j(k) \omega_j(k)k^2(k-\ell)\ell}{i \phi^j_{mn}(k,\ell)} \conj{\widehat{\vartheta}(k-p)\widehat{F}(k-p)\widehat{\vartheta}(p)\widehat{F}(p)} \widehat{G}_m^c(k-\ell) \widehat{F}_n(\ell) dp d\ell dk  \\
& \leq C \int \left(\int \left|k^2 \widehat{F}(k-p)\widehat{F}(p)\right|dp \right)\left(  \int\left|k(k-\ell) \widehat{G}_m^c(k-\ell) \ell \widehat{F}_n(\ell)\right| d\ell\right) dk \\
& \leq C \int \left(\int \left|(1+(k-p)^2 +p^2) \widehat{F}(k-p)\widehat{F}(p)\right|dp \right)  \\
&\hspace{2 in} \times \left(  \int \left|(1+(k-\ell)+ \ell)(k-\ell) \widehat{G}_m^c(k-\ell) \ell \widehat{F}_n(\ell)\right| d\ell\right) dk \\
& \leq C \|\widehat{F}\|_{L^1}\|F\|^2_{H^2} \\
& \leq C \|F\|_{H^2}^3 \lesssim E^{3/2}.
\end{aligned}\]

The second to last step used Cauchy-Schwarz, Young's inequality, and the fact that $\|\widehat{G}^c\|_{L^1} \leq C$ independent of $\epsilon.$ The last step then used \[\|\widehat{F}\|_{L^1} \leq C \|(1+(\cdot)\widehat{F}(\cdot)\|_{L^2} \leq CE^{1/2}.\]

The final term with kernel function $\sqrt{2}/8  k^2 i\omega_m(k-\ell) \ell^2 $ cannot be bounded this way due to the high powers of $k$ and $\ell.$ Since each power of $k$ or $\ell$ is essentially one derivative in real-space, what we have is two derivatives falling on each power of $F.$ When we consider the nonlinear term acting on $F$ as well this puts $5/2$-derivatives on one of the factors of $F.$ 

Therefore, for this last one we must take advantage of the form of (\ref{approx}) and integrate by parts in a different way. First, we simplify the terms in (\ref{gfnon}) with this kernel to
\[\begin{aligned} & -\frac{1}{4} \epsilon^2 \sum_{j=1}^2 \int_0^t  \sum_{m,n=1}^2 \iint_{\mathcal{V} }  \frac{i\omega_j(k) \cdot \omega_m(k-\ell)}{\phi^j_{mn}(k,\ell)} \widehat{G}_m^c(k-\ell) \ell^2 \widehat{F}_n(\ell)    \\
&\hspace{2 in}\times \conj{\left( \sum_{q,r=1}^2 \int k^2 \widehat{G}_q(k-p)\widehat{\vartheta}(p)\widehat{F}_r(p) dp   \right)} d\ell dkds.
\end{aligned}\]

 For $j \neq n,$ we can approximate the kernel function using the fact that $k-\ell \approx \pm k_0.$ This gives us \[  \left| \frac{i\omega_j(k) \cdot \omega_m(k-\ell)}{\phi^j_{mn}(k,\ell)}  - i\omega_m(k-\ell) \right| \leq C \] for all $(k,\ell) \in \mathcal{V} .$ 
Using this estimate on the kernel function, the integrand can then be estimated directly using the method shown for the other kernel functions. When $j = n,$ we approximate the kernel again with
 \[  \left| \frac{i\omega_j(k) \cdot \omega_m(k-\ell)}{\phi^j_{mn}(k,\ell)}   + i\omega_j(k) \right| \leq C \] for all $(k,\ell) \in \mathcal{V} .$  Unfortunately, in this case there still appears to be an extra half-derivative and so this cannot be estimated directly. We proceed in a fashion very similar to \cite{Hunter}. We can drop $\widehat{\vartheta} (p)$ since it only effects this term with $p$ small, in which case the error can be estimated directly. We can also integrate over all of $\mathbb{R}^2$ instead of $\mathcal{V}$ for the same reason. Now using Plancharel's Theorem and the fact that $F$ and $G$ are real, we have
\[\begin{aligned}  \frac{1}{4} \epsilon^2 & \int_0^t \int \left( \sum_{j,m=1}^2 \int i\omega_j(k)  \widehat{G}_m^c(k-\ell) \ell^2 \widehat{F}_j(\ell) d\ell\right)    \conj{\left( \sum_{q,r=1}^2 \int k^2 \widehat{G}_q(k-p)\widehat{F}_r(p) dp   \right)} dkds\\
=& \frac{1}{4} \epsilon^2  \int_0^t \int \left( \sum_{j,m=1}^2 \Omega_j\left(    G_m^c \partial_x^2 F_j \right)\right)   \left( \partial_x^2\left(\sum_{q,r=1}^2 G_q F_r \right) \right) \;dxds\\
=& \frac{1}{4} \epsilon^2  \int_0^t \int \left(\Omega_1\left(  \left(G_1^c + G_2^c\right) \partial_x^2 F_1 \right)  + \Omega_2\left(  \left(G_1^c + G_2^c\right) \partial_x^2 F_2 \right)\right)   \partial_x^2\left(\left(G_1 + G_2 \right) \left(  F_1 +  F_2  \right)\right) dxds\\
=& \frac{1}{4} \epsilon^2  \int_0^t \int \Omega\left( \partial_x^2  \left(F_1-F_2 \right) \left(G_1^c + G_2^c\right) \right)\partial_x^2\left(\left(G_1 + G_2 \right) \left(  F_1 +  F_2  \right)\right)  \;dxds\\
=& \epsilon^2  \int_0^t \int \Omega\left( \partial_x^2  R_2  \cdot \Psi_1^c\right)   \partial_x^2\left(\Psi_1 \cdot R_1 \right)  \;dxds\\
=&\epsilon^2  \int_0^t \int \Omega\left(\partial_x^2  R_2 \right) \cdot \Psi_1^c \cdot \Psi_1 \cdot  \partial_x^2  R_1   \;dxds \\
&+ \epsilon^2  \int_0^t \int \left[\Omega\;,\; \Psi_1^c\right] \partial_x^2  R_2 \cdot \Psi_1 \cdot \partial_x^2 R_1   \;dxds \\
&+ \epsilon^2  \int_0^t \int \Omega\left(\partial_x^2  R_2 \cdot \Psi_1^c \right)  \left[ \partial_x^2\;,\; \Psi_1 \right] R_1   \;dxds\\
\end{aligned}\]
The last two commutators can be estimated directly using Corollary \ref{prop:commcorr} and Proposition \ref{prop:comm2} with Lemma \ref{lem:approx} from the Appendix. Then 
\[\begin{aligned}
 \epsilon^2&  \int_0^t \int \Omega\left(\partial_x^2  R_2 \right) \Psi_1^c \cdot \Psi_1 \cdot  \partial_x^2  R_1 \;dxds   \\
=&  \epsilon^2  \int_0^t \int \partial_x^2  \left(\partial_t R_1 -\epsilon^{-\beta}\vartheta^{-1}\text{Res}(\epsilon \Psi) \right) \Psi_1^c\cdot \Psi_1 \cdot \partial_x^2  R_1  \;dxds  \\
=& \frac{1}{2}\epsilon^2  \int \left(\partial_x^2  R_1\right)^2 \left( \Psi_1^c\Psi_1 \right)\;dx \bigg|_0^t  -\frac{1}{2}\epsilon^2  \int_0^t \int \left(\partial_x^2  R_1\right)^2 \partial_t\left( \Psi_1^c\Psi_1 \right)\;dxds \\
& -\epsilon^{2-\beta} \int_0^t \int \partial_x^2 \vartheta^{-1} \text{Res}(\epsilon\Psi) \cdot \Psi_1^c\cdot \Psi_1 \cdot \partial_x^2 R_1  \;dxds
\end{aligned}\]
Now the first term can be treated as earlier boundary terms; we subtract it from the left-hand side of our estimate using the fact that it is both small and bounded independent of $t.$ Then both of the other terms can be bounded by $C\epsilon^2(1+E)$ directly. The integrand (\ref{ffnon}) can be bounded using the same method used for (\ref{gfnon}). For the most part this has $G_q$ replaced with $F_q$ and the real difference between these two terms comes at the point of 
\[\begin{aligned}
  \epsilon^{\beta+1}& \int_0^t \int \Omega\left( \partial_x^2  R_2  \cdot \Psi_1^c\right)   \partial_x^2\left(R_1 \cdot R_1 \right)  \;dxds\\
=&2\epsilon^{\beta+1} \int_0^t \int \Omega\left( \partial_x^2  R_2  \cdot \Psi_1^c\right)   (\partial_x^2R_1 \cdot R_1 + (\partial_x R_1)^2)  \;dxds.
\end{aligned}\]
The second term here can be bounded using the following
\[\begin{aligned}
2\epsilon^{\beta+1} &\int_0^t \int \Omega\left( \partial_x^2  R_2  \cdot \Psi_1^c\right)    (\partial_x R_1)^2  \;dxds\\
=&  -2\epsilon^{\beta+1} \int_0^t \int \left( \partial_x^2  R_2  \cdot \Psi_1^c\right)   \Omega\left( (\partial_x R_1)^2\right) \;dxds\\
\leq & C\epsilon^{\beta+1} \int_0^t \|R_2\|_{H^2} \|(\partial_xR_1)^2\|_{H^{1}}\;ds \\
\leq & C\epsilon^{\beta+1} \int_0^t \|R_2\|_{H^2} \|R_1\|^2_{H^{2}}\;ds \\
\leq &  C\epsilon^{\beta+1} \int_0^t E^{3/2}\;ds
\end{aligned}\]

The first term can be bounded using similar commutator estimates from (\ref{gfnon})
\[\begin{aligned}
&2\epsilon^{\beta+1} \int_0^t \int \Omega\left( \partial_x^2  R_2  \cdot \Psi_1^c\right)   \partial_x^2R_1 \cdot R_1    \;dxds\\
&=2\epsilon^{\beta+1} \int_0^t \int \Omega\left( \partial_x^2  R_2 \right)  \cdot \Psi_1^c   \cdot\partial_x^2R_1 \cdot R_1    dxds +2\epsilon^{\beta+1} \int_0^t \int \left[\Omega \;,\; \Psi_1^c\right] \partial_x^2  R_2 \cdot   \partial_x^2R_1 \cdot R_1    dxds\\
&=2\epsilon^{\beta+1} \int_0^t \int \partial_x^2 \left( \partial_t  R_1 -\epsilon^{-\beta}\vartheta^{-1}\text{Res}(\epsilon \Psi)\right)  \cdot \Psi_1^c   \cdot\partial_x^2R_1 \cdot R_1    \;dxds\\
&\hspace{1 in} +2\epsilon^{\beta+1} \int_0^t \int \left[\Omega \;,\; \Psi_1^c\right] \partial_x^2  R_2 \cdot   \partial_x^2R_1 \cdot R_1    \;dxds\\
&=\epsilon^{\beta+1} \int \partial_x^2  R_1 ^2\cdot  \Psi_1^c   \cdot R_1    \;dx \bigg|_0^t-\epsilon^{\beta+1} \int_0^t \int \partial_x^2  R_1 ^2\cdot  \partial_t\left( \Psi_1^c   \cdot R_1 \right)   \;dxds \\
&-2\epsilon\int_0^t\int \vartheta^{-1}\text{Res}(\epsilon \Psi)  \cdot \Psi_1^c   \cdot\partial_x^2R_1 \cdot R_1    dxds
 +2\epsilon^{\beta+1} \int_0^t \int \left[\Omega \;,\; \Psi_1^c\right] \partial_x^2  R_2 \cdot   \partial_x^2R_1 \cdot R_1    dxds\\
\end{aligned}\]
Again we subtract the boundary term from the left-hand side, the commutator is estimated using Corollary \ref{prop:commcorr}, and the other two terms can be estimated using Cauchy-Schwartz.

 Thus, over the region $\mathcal{V}$ we have \[ \int_0^t \partial_s E(s) \; ds \lesssim   \int_0^t \epsilon^2(1+E) + \epsilon^3E^2\; ds. \]


\subsection{In the region with $\ell$ small, $\mathcal{W}$}

Again we are hoping to bound the terms from (\ref{epsfourier}) but now over the region $\mathcal{W}.$ Here we use the fact that $\widehat{\vartheta}^{-1}(k) = 1$ in $\mathcal{W}.$ This leaves us with two groups of terms, those with $\widehat{\vartheta}(\ell)$ and those without. For the terms with $\widehat{\vartheta}(\ell),$ define $\vartheta_0 = \vartheta - \epsilon.$ This splits the integral up into two. One term will now have $\vartheta_0(\ell) = 0$ when $\ell=0.$ The other is of order $\epsilon^2$ and so can be estimated directly without any added integration by parts. For the first term we integrate with respect to $t$ and move to the rotating coordinate system to get 
\[
\epsilon \sum_{j,m,n=1}^2 \int_0^t\iint_{\mathcal{W}} \widehat{a} ( k,k-\ell, \ell) e^{i \phi^j_{mn}(k,\ell)s}  \conj{\widehat{f}_j(k)} \widehat{g}_m(k-\ell) \widehat{\vartheta}_0(\ell)\widehat{f}_n(\ell) d\ell dk ds
\]
Then when we integrate by parts with respect to $s,$ we get 
\begin{align}
&\epsilon \sum_{j,m,n=1}^2  \iint_{ \mathcal{W}}  \frac{\widehat{a} ( k,k-\ell, \ell)}{i \phi^j_{mn}(k,\ell)}e^{i \phi^j_{mn}(k,\ell)s}   \conj{\widehat{f}_j(k)} \widehat{g}^c_m(k-\ell) \widehat{\vartheta}_0(\ell)\widehat{f}_n(\ell) d\ell dk \bigg|_0^t\label{boundary1W} \\  
&-\epsilon \sum_{j,m,n=1}^2  \int_0^t\iint_{ \mathcal{W}}  \frac{\widehat{a} ( k,k-\ell, \ell)}{i \phi^j_{mn}(k,\ell)}e^{i \phi^j_{mn}(k,\ell)s} \widehat{\vartheta}_0(\ell) \partial_s\left(\conj{\widehat{f}_j(k)} \widehat{g}^c_m(k-\ell) \widehat{f}_n(\ell)\right) d\ell dk ds. \label{IBPtW}
\end{align}

We note that $\mathcal{T} \cap \mathcal{W} = \{\ell=0\}.$ However, $\widehat{\vartheta}_0(0) = 0$ as well and so for $(k,\ell) \in \mathcal{W},$ \[
\left| \frac{\widehat{a} ( k,k-\ell, \ell)\widehat{\vartheta}_0(\ell)}{i \phi^j_{mn}(k,\ell)}\right| < C.
\] Therefore, the boundary term (\ref{boundary1W}) is well-defined and bounded by $\epsilon E.$ We can subtract it to the left-hand side of our estimate as we did with the boundary term in $\mathcal{V}.$

For the non-boundary term we can use the fact that $\mathcal{W}$ is compact to bound all the terms directly using Young's inequality and Cauchy-Schwartz. In particular, since all the kernels can be bounded by a constant, we do not need any of the extra integration by parts techniques that we used in $\mathcal{V}.$

Finally, for the two terms without $\widehat{\vartheta}(\ell)$ we note that both terms do not need an added transparency condition because they have a factor $\ell^2$ in the kernel already. This will give us 
\[
\left| \frac{\widehat{a} ( k,k-\ell, \ell)}{i \phi^j_{mn}(k,\ell)}\right| < C|\ell|
\] 
and so we can bound these terms directly after integrating by parts as well.


\subsection{In the region with $k$ small, $\mathcal{Z}$}

Over the region $\mathcal{Z}$ we have both the terms from (\ref{epsfourier}) as well as those from $E_2.$ Due to the fact that $\widehat{\vartheta}^{-1}(k)$ is $\mathcal{O}(\epsilon^{-1})$ near $k=0$ we will need to show that terms of formal order as high as $\mathcal{O}(\epsilon^3)$ are bounded by $ \epsilon^2(1+E) + \epsilon^3E^2.$ As we will see the terms in $E_2$ were chosen precisely because they give cancellations of terms that are not of $\mathcal{O}(\epsilon^2E)$ and to which we can't apply the method of space-time resonances in what follows. We note that  $\widehat{\vartheta}(\ell) =1$ in $\mathcal{Z}.$

For $\mathcal{O}(\epsilon)$ we have 
\begin{align}
&\frac{\sqrt{2}}{4} \epsilon \sum^2_{j,m,n=1} \iint_{\mathcal{Z}} \bigg(i\omega_j(k) \frac{1}{\widehat{\vartheta}(k)} +  \frac{1}{2}  k^2 i\omega_m(k-\ell) \ell^2  + i\omega_j(k)k^2 (k-\ell)^2 \frac{1}{\widehat{\vartheta}(k)}        \nonumber \\
&\hspace{.1 in}  +2 i\omega_j(k) k^2 (k-\ell) \ell    \frac{1}{\widehat{\vartheta}(k)}  + i\omega_j(k)k^2 \ell^2    \frac{1}{\widehat{\vartheta}(k)}     - i\omega_j(k)k^2 \ell^2 \bigg) \conj{\widehat{F}_j(k)} \widehat{G}_m^c(k-\ell)\widehat{F}_n(\ell) \;d\ell dk \nonumber\\
&+ \frac{1}{2}\sum_{j=1}^2  \epsilon \int\displaylimits_{|k| < \delta} 
\conj{\Omega_j \widehat{F}_j} \cdot B_j(\widehat{G}^c,\widehat{F})    +    \conj{\widehat{F}_j}\cdot B_j( \widehat{G}^c, \Lambda \widehat{F}) + \conj{\widehat{F}_j} \cdot B_j(\Lambda \widehat{G}^c, \widehat{F}) \nonumber\\
& \hspace{1 in} +  \conj{\widehat{F}_j}\cdot  B_j((\partial_t-\Lambda) \widehat{G}^c, \widehat{F}) \;dk   \nonumber\\
&= \frac{\sqrt{2}}{4} \epsilon \sum^2_{j,m,n=1} \iint_{\mathcal{Z}} \bigg(  \frac{1}{2}  k^2 i\omega_m(k-\ell) \ell^2  + i\omega_j(k)k^2 (k-\ell)^2 \frac{1}{\widehat{\vartheta}(k)}    +2 i\omega_j(k) k^2 (k-\ell) \ell    \frac{1}{\widehat{\vartheta}(k)}     \nonumber \\
&\hspace{1 in}   + i\omega_j(k)k^2 \ell^2    \frac{1}{\widehat{\vartheta}(k)}     - i\omega_j(k)k^2 \ell^2 \bigg) \conj{\widehat{F}_j(k)} \widehat{G}_m^c(k-\ell)\widehat{F}_n(\ell) \;d\ell dk \label{zstres}\\
&+ \frac{1}{2}\sum_{j=1}^2  \epsilon \int\displaylimits_{|k| < \delta}   \conj{\widehat{F}_j} \bigg( \frac{1}{\widehat{\vartheta} (k) } N_j(\widehat{G}^c,\widehat{\vartheta}\widehat{F})    -\Omega_j  B_j( \widehat{G}^c,  \widehat{F})+   B_j( \widehat{G}^c, \Lambda \widehat{F}) +  B_j(\Lambda \widehat{G}^c, \widehat{F})\bigg)  \;dk \label{canceleps2} \\
&+ \frac{1}{2}\sum_{j=1}^2  \epsilon \int\displaylimits_{|k| < \delta}  \conj{\widehat{F}_j}\cdot  B_j((\partial_t-\Lambda) \widehat{G}^c, \widehat{F}) \;dk \label{gevo}
\end{align}
with similar complex conjugate terms. We will use space-time resonance methods on (\ref{zstres}); the form of $E_2$ was chosen precisely to ensure that the terms in (\ref{canceleps2}) all cancel; and as shown in \cite{CEW}, $\|(\partial_t-\Lambda) \widehat{G}^c\|_{L^2} \leq C \epsilon^2,$ and so the last term (\ref{gevo}) can be bounded directly.

For (\ref{zstres}) we first integrate with respect to $t$ and move to a rotating coordinate frame giving us three terms of the form 
\[   
\epsilon \sum_{j,m,n=1}^2 \int_0^t\iint_{\mathcal{Z}} \widehat{a} ( k,k-\ell, \ell) e^{i \phi^j_{mn}(k,\ell)s} \widehat{\vartheta}^{-1}(k) \conj{\widehat{f}_j(k)} \widehat{g}^c_m(k-\ell) \widehat{f}_n(\ell) d\ell dk ds
\]
as well as two others without a factor of $\widehat{\vartheta}^{-1}(k) .$ We integrate by parts with respect to $s.$ Those terms which contain a factor of $\widehat{\vartheta}^{-1}(k)$ are 
\begin{align}
&\epsilon \sum_{j,m,n=1}^2  \iint_{ \mathcal{Z}}  \frac{\widehat{a} ( k,k-\ell, \ell)}{i \phi^j_{mn}(k,\ell)}e^{i \phi^j_{mn}(k,\ell)s}   \widehat{\vartheta}^{-1}(k)\conj{\widehat{f}_j(k)} \widehat{g}^c_m(k-\ell) \widehat{f}_n(\ell) d\ell dk \bigg|_0^t\label{boundary1Z} \\  
&-\epsilon \sum_{j,m,n=1}^2  \int_0^t\iint_{ \mathcal{Z}}  \frac{\widehat{a} ( k,k-\ell, \ell)}{i \phi^j_{mn}(k,\ell)}e^{i \phi^j_{mn}(k,\ell)s} \widehat{\vartheta}^{-1}(k) \partial_s\left(\conj{\widehat{f}_j(k)} \widehat{g}^c_m(k-\ell) \widehat{f}_n(\ell)\right) d\ell dk ds. \label{IBPtZ}
\end{align}

We note that $\mathcal{T} \cap \mathcal{Z} = \{k=0\}.$ However, $\widehat{a}(0,-\ell,\ell) = 0$ and it approaches $0$ cubically. So for $(k,\ell) \in \mathcal{Z},$ \[
\left| \frac{\widehat{a} ( k,k-\ell, \ell)}{i \phi^j_{mn}(k,\ell)}\right| < C|k|^2.
\] Therefore, both terms are well-defined, and moreover, each of the kernel functions approach zero so rapidly we have 
\[
\left| \frac{\widehat{a} ( k,k-\ell, \ell) \widehat{\vartheta}^{-1}(k) }{i \phi^j_{mn}(k,\ell)}\right| < C|k|.
\] 
Therefore we use the fact that $\mathcal{Z}$ is compact to bound all the terms directly as we did in $\mathcal{W}.$ For those last terms which don't contain a factor of $\vartheta^{-1}(k)$ we note that 
\[
\left| \frac{\widehat{a} ( k,k-\ell, \ell)}{i \phi^j_{mn}(k,\ell)}\right| < C|k|
\]
 for $(k,\ell) \in \mathcal{Z}.$ Thus these terms can still be bounded directly once we integrate by parts.


For the $\mathcal{O}(\epsilon^2)$ terms in $\mathcal{Z}$ we again note that we can restrict to $\widehat{G}^c$ and that $\|(\partial_t-\Lambda) \widehat{G}^c\|_{L^2} \leq C \epsilon^2.$ This leaves us with

\[\begin{aligned}
 \epsilon^2\sum_{j=1}^2  &\int\displaylimits_{|k| < \delta}  \conj{\left( \widehat{\vartheta}^{-1} \widehat{N}_j(\widehat{G}^c,\widehat{\vartheta}\widehat{F})  +  B_j(\Lambda \widehat{G}^c, \widehat{F})  +B_j( \widehat{G}^c, \Lambda\widehat{F})    \right)} \cdot B_j(\widehat{G}^c,\widehat{F})  \\
&\hspace{.5 in}+ \conj{\widehat{F}_j} \cdot B_j(\widehat{G}^c,  \widehat{\vartheta}^{-1} \widehat{N}(\widehat{G}, \vartheta \widehat{F}) )\;dk +c.c.
\end{aligned}\]

The first term and its complex conjugate can be written

\[\begin{aligned}
 &\epsilon^2  \sum_{j=1}^2\int\displaylimits_{|k| < \delta}  \conj{ \widehat{\vartheta}^{-1} \widehat{N}(\widehat{G}^c, \widehat{\vartheta} \widehat{F})    } \cdot B_j(\widehat{G}^c,\widehat{F})  +   \widehat{\vartheta}^{-1} \widehat{N}(\widehat{G}^c, \widehat{\vartheta} \widehat{F})     \cdot \conj{B_j(\widehat{G}^c,\widehat{F})  } \;dk\\
& = \epsilon^2  \sum_{\substack{j,m,n,\\ q,r=1}}^2\iiint\displaylimits_{|k| < \delta}  \conj{  \frac{i\omega_j(k)}{\sqrt{2}} \frac{\widehat{\vartheta}(p)}{\widehat{\vartheta}(k)}  \widehat{G}_m^c(k-p) \widehat{F}_n(p)      } \cdot \frac{\omega_j(k)/\sqrt{2}}{\phi^j_{qr}(k,\ell)} \frac{\widehat{\vartheta}(\ell)}{\widehat{\vartheta}(k)} \widehat{G}^c_q(k-\ell) \widehat{F}_r(\ell)    \\
& \hspace{1 in} +     \frac{i\omega_j(k)}{\sqrt{2}} \frac{\widehat{\vartheta}(\ell)}{\widehat{\vartheta}(k)}  \widehat{G}^c_q(k-\ell) \widehat{F}_r(\ell)       \cdot \conj{\frac{\omega_j(k)/\sqrt{2}}{\phi^j_{mn}(k,p)} \frac{\vartheta(p)}{\widehat{\vartheta}(k)} \widehat{G}_m^c(k-p) \widehat{F}_n(p) }   \;dp d\ell dk\\
& = \frac{1}{2}\epsilon^2  \sum_{\substack{j,m,n,\\ q,r=1}}^2\iiint\displaylimits_{|k| < \delta}    i\omega_j(k)\omega_j(k) \frac{\widehat{\vartheta}(p)\widehat{\vartheta}(\ell)}{\widehat{\vartheta}(k)\widehat{\vartheta}(k)}       \left( \frac{-\omega_q(k-\ell) - \omega_r(\ell) + \omega_m(k-p) + \omega_n(p)}{\phi^j_{mn}(k,p)\phi^j_{qr}(k,\ell)} \right)\\
&\hspace{2 in } \times \conj{\widehat{G}_m^c(k-p) \widehat{F}_n(p)} \widehat{G}^c_q(k-\ell) \widehat{F}_r(\ell)   dpd\ell dk \\
\end{aligned}\]

The second and third term as well as their complex conjugates are

\[\begin{aligned}
 &\epsilon^2  \sum_{j=1}^2 \int\displaylimits_{|k| < \delta}  \conj{\big(  B_j(\Lambda \widehat{G}^c, \widehat{F})   + B_j( \widehat{G}^c, \Lambda\widehat{F})   \big)}  B_j(\widehat{G}^c,\widehat{F})  +\big(   B_j(\Lambda \widehat{G}^c, \widehat{F})  +B_j( \widehat{G}^c, \Lambda\widehat{F})  \big)  \conj{B_j(\widehat{G}^c,\widehat{F}) } dk \\
 =& \epsilon^2 \hspace{-.05 in} \sum_{\substack{j,m,n,\\ q,r=1}}^2\iiint\displaylimits_{|k| < \delta}  \conj{  \frac{\omega_j(k)/\sqrt{2}}{\phi^j_{mn}(k,p)} \frac{\widehat{\vartheta}(p)}{\widehat{\vartheta}(k)} ( i\omega_m(k-p) + i\omega_n(p)  )\widehat{G}_m^c(k-p) \widehat{F}_n(p)      }  \frac{\omega_j(k)/\sqrt{2}}{\phi^j_{qr}(k,\ell)} \frac{\widehat{\vartheta}(\ell)}{\widehat{\vartheta}(k)} \widehat{G}^c_q(k-\ell) \widehat{F}_r(\ell)    \\
&+   \frac{\omega_j(k)/\sqrt{2}}{\phi^j_{qr}(k,\ell)} \frac{\widehat{\vartheta}(\ell)}{\widehat{\vartheta}(k)} \left( i\omega_q(k-\ell) + i\omega_r(\ell)  \right)\widehat{G}^c_q(k-\ell) \widehat{F}_r(\ell)       \cdot \conj{ \frac{\omega_j(k)/\sqrt{2}}{\phi^j_{mn}(k,p)} \frac{\widehat{\vartheta}(p)}{\widehat{\vartheta}(k)} \widehat{G}_m^c(k-p) \widehat{F}_n(p) }d\ell dpdk   \\
 =&-\frac{1}{2} \epsilon^2  \sum_{\substack{j,m,n,\\ q,r=1}}^2\iiint\displaylimits_{|k| < \delta}    i\omega_j(k)\omega_j(k) \frac{\widehat{\vartheta}(p)\widehat{\vartheta}(\ell)}{\widehat{\vartheta}(k)\widehat{\vartheta}(k)}       \left( \frac{-\omega_q(k-\ell) - \omega_r(\ell) + \omega_m(k-p) + \omega_n(p)}{\phi^j_{mn}(k,p)\phi^j_{qr}(k,\ell)} \right)\\
&\hspace{2 in } \times \conj{\widehat{G}_m^c(k-p) \widehat{F}_n(p)} \widehat{G}^c_q(k-\ell) \widehat{F}_r(\ell) d\ell dpdk.    
\end{aligned}\]

These both cancel. The last term is of the form

\[\begin{aligned}
 &\epsilon^2 \sum_{j=1}^2  \int\displaylimits_{|k| < \delta}   \conj{\widehat{F}_j} \cdot B_j(\widehat{G}^c,  \widehat{\vartheta}^{-1} N(\widehat{G}, \widehat{\vartheta} \widehat{F}) )dk \\
&=  \epsilon^2  \sum_{\substack{j,m,n,\\ q,r=1}}^2\iiint\displaylimits_{|k| < \delta}   \conj{\widehat{F}_j(k)} \cdot \frac{\omega_j(k)/\sqrt{2}}{\phi^j_{mn}(k,\ell)} \frac{\widehat{\vartheta}(\ell)}{\widehat{\vartheta}(k)}  \widehat{G}_m^c(k-\ell)  \frac{\widehat{\vartheta}(p)}{\widehat{\vartheta}(\ell)} \frac{i\omega_n(\ell)}{\sqrt{2}}\widehat{G}_q(\ell-p)  \widehat{F}_r(p)   dpd\ell dk \\
&= \frac{1}{2} \epsilon^2  \sum_{\substack{j,m,n,\\ q,r=1}}^2\iiint\displaylimits_{|k| < \delta}     \frac{i\omega_j(k)\omega_n(\ell)}{\phi^j_{mn}(k,\ell)} \frac{\widehat{\vartheta}(p)}{\widehat{\vartheta}(k)} \conj{\widehat{F}_j(k)} \widehat{G}_m^c(k-\ell) \widehat{G}_q(\ell-p)  \widehat{F}_r(p) dpd\ell dk
\end{aligned}\]

plus its complex conjugate. We can restrict to $\widehat{G}_q^c(\ell-p)$ since those terms with $\widehat{G}_q^s$ are $\mathcal{O}(\epsilon^3)$ with only one factor of $\widehat{\vartheta}^{-1}(k).$ Thus we have 
\begin{equation}
\frac{1}{2} \epsilon^2  \sum_{\substack{j,m,n,\\ q,r=1}}^2\iiint\displaylimits_{|k| < \delta}     \frac{i\omega_j(k)\omega_n(\ell)}{\phi^j_{mn}(k,\ell)} \frac{\widehat{\vartheta}(p)}{\widehat{\vartheta}(k)} \conj{\widehat{F}_j(k)} \widehat{G}_m^c(k-\ell) \widehat{G}^c_q(\ell-p)  \widehat{F}_r(p) dpd\ell dk . \label{split}
\end{equation}

Using the support of $\widehat{G}^c,$ we can split this integral up into two different regions depending on the value of $p$. Since $|k| < \delta,$ the factor of $\widehat{G}_m^c(k-\ell) $ above requires that $\ell \approx \pm k_0.$ Then similarly, the factor of $\widehat{G}^c_q(\ell-p)$ requires that $p \approx 0$ or $p \approx \pm 2k_0.$ More precisely, if we define 
\begin{equation}
\widehat{G}^c = \sum_{\{l=\pm\}}\widehat{G}^{c,l}
\label{plusminusg} \end{equation} 
with $\text{supp}( \widehat{G}^{c,l}(k)  )  \subset \{|k-lk_0| < \delta\}$, then one of those terms becomes 

\[\begin{aligned}
 &\epsilon^2   \iiint\displaylimits_{\substack{|k|<\delta\\ |p|<3 \delta}}    \frac{i\omega_j(k)\omega_n(\ell)}{\phi^j_{mn}(k,\ell)} \frac{\widehat{\vartheta}(p)}{\widehat{\vartheta}(k)} \conj{\widehat{F}_j(k)} \widehat{G}_m^c(k-\ell) \widehat{G}^c_q(\ell-p)  \widehat{F}_r(p)  \;dpd\ell dk  \\
& = \epsilon^2  \int \bigg(\int\displaylimits_{|k| < \delta}    \frac{i\omega_j(k)\omega_n(\ell)}{\phi^j_{mn}(k,\ell)\widehat{\vartheta}(k)}  \conj{\widehat{F}_j(k)} \widehat{G}_m^c(k-\ell) dk \bigg)\\
&\hspace{2 in} \times \bigg(\int\displaylimits_{|p| < 3\delta}  (\widehat{\vartheta}(p)-\widehat{\vartheta}(\ell - nk_0)) \widehat{G}^{c,n}_q(\ell-p)  \widehat{F}_r(p)  \;dp\bigg)  d\ell \\
& + \epsilon^2  \int\bigg(  \int\displaylimits_{|k| < \delta}   \frac{i\omega_j(k)\omega_n(\ell)}{\phi^j_{mn}(k,\ell)\widehat{\vartheta}(k)}  \left(\widehat{\vartheta}(\ell- nk_0)-\widehat{\vartheta}(k - (l+n)k_0)\right) \conj{\widehat{F}_j(k)} \widehat{G}^{c,l}_m(k-\ell) dk \bigg)\\
&\hspace{2 in} \times \bigg(\int\displaylimits_{|p| < 3\delta} \widehat{G}^{c,n}_q(\ell-p)  \widehat{F}_r(p)  \;dp\bigg)  d\ell  \\
& + \epsilon^2  \int\bigg(  \int\displaylimits_{|k| < \delta}   \frac{i\omega_j(k)\omega_n(\ell)}{\phi^j_{mn}(k,\ell)\widehat{\vartheta}(k)} \widehat{\vartheta}(k -(l+n)k_0) \conj{\widehat{F}_j(k)} \widehat{G}^{c,l}_m(k-\ell) dk \bigg)\\
&\hspace{2 in} \times \bigg(\int\displaylimits_{|p| < 3\delta}\widehat{G}^{c,n}_q(\ell-p)  \widehat{F}_r(p)  \;dp\bigg)  d\ell  \\
\end{aligned}\]

The first and second term can each be bounded using Proposition \ref{prop:lemma9} from the Appendix. If $l = -n$ the third term can be bounded directly since $\widehat{\vartheta}(k -(l+n)k_0) = \widehat{\vartheta}(k).$ If $l =n,$ we will need to integrate by parts with respect to $t$ as we did for $E_1.$ This calculation is motivated by the second normal form transformation of \cite{CEW}. We move to a rotating coordinate frame and we integrate with respect to $t$ giving us

\[
 \epsilon^2  \sum_{\substack{j,m,n,\\ q,r=1}}^2\int_0^t \hspace{-.1 in} \iiint\displaylimits_{\substack{|k|<\delta\\ |p \pm 2k_0|<3 \delta}}  \frac{i\omega_j(k)\omega_n(\ell)}{\phi^j_{mn}(k,\ell)} \frac{\widehat{\vartheta}(k \pm 2k_0)}{\widehat{\vartheta}(k)} e^{i\phi^j_{mqr}(k,\ell,p)s} \conj{\widehat{f}_j(k)} \widehat{g}^c_m(k-\ell) \widehat{g}_q(\ell-p)  \widehat{f}_r(p)  \;dpd\ell dk ds
\]
with 
\[ \phi^j_{mqr}(k,\ell,p) =   -\omega_j(k)+ \omega_m(k-\ell) + \omega_q(\ell-p) + \omega_r(p)\]

Since $\phi^j_{mqr}(k,\ell,p) \neq 0$ in the appropriate region, we can integrate by parts with respect to $s$ to get

\[\begin{aligned}
 &\epsilon^2  \sum_{\substack{j,m,n,\\ q,r=1}}^2\int_0^t  \iiint  \frac{i\omega_j(k)\omega_n(\ell)}{i\phi^j_{mn}(k,\ell)\phi^j_{mqr}(k,\ell,p)} \frac{\widehat{\vartheta}(k \pm 2k_0)}{\widehat{\vartheta}(k)} e^{i\phi^j_{mqr}(k,\ell,p)s}\\
&\hspace{2 in}\times\partial_s\left( \conj{\widehat{f}_j(k)} \widehat{g}^c_m(k-\ell) \widehat{g}_q(\ell-p)  \widehat{f}_r(p)  \right)\;dpd\ell dk ds
\end{aligned}\]
plus a boundary term. Both can then be bounded directly.


For $\mathcal{O}(\epsilon^3)$ we have
\[ \begin{aligned}
&2 \epsilon^3 \sum_{j=1}^2  \int\displaylimits_{|k|< \delta}\conj{B_j( \widehat{G}^c, \widehat{\vartheta}^{-1} N(\widehat{G}, \widehat{\vartheta} \widehat{F})   )}\cdot B_j(\widehat{G}^c, \widehat{F}) \\
&=2 \epsilon^3 \sum_{\substack{j,m,n,q,\\ r,a,b=1}}^2 \iiiint\displaylimits_{|k| < \delta} \conj{ \frac{\omega_j(k)/\sqrt{2}}{\phi^j_{mn}(k,\ell)} \frac{\widehat{\vartheta}(\ell)}{\widehat{\vartheta}(k)} \widehat{G}_m^c(k-\ell) \frac{\widehat{\vartheta}(p)}{\widehat{\vartheta}(\ell)}\frac{i\omega_n(\ell)}{\sqrt{2}} \widehat{G}_q(\ell-p)\widehat{F}_r(p)      }\\
&\hspace{1 in }\times \frac{\omega_j(k)/\sqrt{2}}{\phi^j_{ab}(k,q)} \frac{\widehat{\vartheta}(q)}{\widehat{\vartheta}(k)} \widehat{G}^c_a(k-q) \widehat{F}_b(q)\;dqd\ell dp dk
\end{aligned}\]
along with the complex conjugate. Again we restrict to  $\text{supp}(\widehat{G}^c)$ giving us 
\[\begin{aligned}
-\frac{\sqrt{2}}{2} \epsilon^3 \sum_{\substack{j,m,n,q,\\ r,a,b=1}}^2 & \iiiint\displaylimits_{|k| < \delta} \frac{i\omega_j(k)\omega_j(k)\omega_n(\ell)}{\phi^j_{mn}(k,\ell)\phi^j_{ab}(k,q)} \frac{\widehat{\vartheta}(p)\widehat{\vartheta}(q)}{\widehat{\vartheta}(k)\widehat{\vartheta}(k)}      \\
&\hspace{1 in }\times  \conj{\widehat{G}_m^c(k-\ell)  \widehat{G}^c_q(\ell-p)\widehat{F}_r(p) }\widehat{G}^c_a(k-q) \widehat{F}_b(q)\;dqd\ell dp dk
\end{aligned}\]
This term can be bounded exactly as (\ref{split}) using the different supports of $\ell$ and $p.$


\section{Error Estimates} 

In this section, we will put together the bounds found in each of the separate regions. Recall, that the goal is to show that the error is $\mathcal{O}(1)$ bounded in terms of epsilon for $t \in \left[0,\frac{T_0}{\epsilon^2}\right].$ This is done by showing
\[  \partial_t E \lesssim \epsilon^2(1+E) + \epsilon^3E^2  \]
which by using Gronwall's inequality will show
\[  \sup_{t \in [0,T_0/\epsilon^2]} E\left(t\right) \leq C \]
for a constant independent of $\epsilon.$
It was first shown that, with the addition of $E_1,$ all terms of $\partial_t E$ could be bounded by some power of $\epsilon$ times some power of $E.$ In other words, the quasilinearity no longer prevented the estimates from closing. To get the bound for the time interval necessary we then used the space-time resonance method. We integrated both sides with respect to $t$ to get 
\[  E(t)-E(0) =   \int_0^t \partial_s E(s) \; ds .\]
To bound the right-hand side we integrated by parts with respect to $s$ and bounded the remaining terms. This gave us 
\[  E(t)-E(0) \lesssim  \epsilon \left(E(t) - E(0)\right) +   \int_0^t\epsilon^2(1+E(s)) + \epsilon^3E(s)^2 \; ds ,\]
and so 
\[(1-C\epsilon)  E(t)  \lesssim (1-C\epsilon)E(0) +   \int_0^t\epsilon^2(1+E(s)) + \epsilon^3E(s)^2 \; ds.  \]

In essence the boundary terms have been added to the energy as $\mathcal{O}(\epsilon)$ correction terms. Since they are small and no longer integrated with respect to $t,$ these terms do not effect the fact that the left-hand side above is equivalent to $\|R\|_{H^2}.$ We then have

\[ 
E\left(t\right) \leq C_1 \left(  E(0) +   \int_0^t\epsilon^2(1+E(s)) + \epsilon^3E(s)^2 \; ds \right)    
\] 
and so for all $t$ such that $\epsilon E(t) \leq 1,$
\[ \begin{aligned}
E\left(t\right) &\leq C_1  E(0) +  C_1\epsilon^2  \int_0^t (1+2E(s)) \; ds \\   
&= C_1  (E(0)+\epsilon^2t) + 2C_1 \epsilon^2  \int_0^t E(s) \; ds.
\end{aligned}\] 
An application of Gronwall's inequality then gives us
\[ 
E(t) \leq C_1\left(E(0) + \epsilon^2t\right) e^{2C_1\epsilon^2t}.
\]
For $t=T_0/\epsilon^2$ we have
\[ 
E\left(T_0/\epsilon^2\right) \leq C_1\left(E(0) + T_0\right) e^{2C_1T_0}.
\]
Therefore if $\epsilon$ is small enough such that 
\[  C_1\left(E(0) + T_0\right) e^{2C_1T_0} \leq \frac{1}{\epsilon}, \]
then we have
\[  \sup_{t \in [0,T_0/\epsilon^2]} \|R(t)\|_{H^2} < C\]
independent of $\epsilon$ as desired.


\section{Appendix}

We include here the proof of some of the estimates used throughout this paper.

We first relate the effects of $\Omega$ to the more common operator $ \Lambda$, with symbol $\widehat{\Lambda^2}(k) = -|k|$,
and $\widehat{\Lambda} (k) = i\cdot \text{sgn}(k)\sqrt{|k|}$.

\begin{lemma}\label{lem:approx} For any $s > 0$, we have
\begin{eqnarray}
\| (\Omega^2 - \Lambda^2) u \|_{H^s} & \le &  C_s \| u \|_{L^2} \\ \nonumber
\| (\Omega - \Lambda ) u \|_{H^s} &\le & C_s  \| u \|_{L^2}
\end{eqnarray}
i.e. the difference between $(\Omega^2 - \Lambda^2)$ is infinitely smoothing, as is $(\Omega - \Lambda ) $.
\end{lemma}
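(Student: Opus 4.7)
The plan is to compute the Fourier symbols of $\Omega^2 - \Lambda^2$ and $\Omega - \Lambda$ and show each is bounded pointwise by $C_s (1+k^2)^{-s/2}$, so that the operator maps $L^2$ to $H^s$ trivially via Plancherel.

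For the first inequality, the symbol of $\Omega^2$ is $(i\,\text{sgn}(k)\sqrt{k\tanh(k)})^2 = -k\tanh(k) = -|k|\tanh(|k|)$ (using that $k$ and $\tanh(k)$ have the same sign), while the symbol of $\Lambda^2$ is $-|k|$. So the symbol of $\Omega^2 - \Lambda^2$ equals
\[
m_2(k) \;=\; |k|\bigl(1 - \tanh(|k|)\bigr).
\]
This is continuous, nonnegative, and vanishes at $k=0$. As $|k| \to \infty$, $1-\tanh(|k|) = 2e^{-2|k|} + O(e^{-4|k|})$, so $m_2(k)$ decays exponentially at infinity. Therefore $(1+k^2)^{s/2} m_2(k)$ is bounded on all of $\mathbb{R}$ for every $s \ge 0$, giving a constant $C_s$ with $(1+k^2)^{s/2}|m_2(k)| \le C_s$, and Plancherel finishes the first estimate.

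For the second inequality, the symbol of $\Omega - \Lambda$ is
\[
m_1(k) \;=\; i\,\text{sgn}(k)\Bigl(\sqrt{|k|\tanh(|k|)} - \sqrt{|k|}\Bigr) \;=\; i\,\text{sgn}(k)\sqrt{|k|}\bigl(\sqrt{\tanh(|k|)} - 1\bigr).
\]
Near $k = 0$, $\tanh(|k|) = |k| + O(|k|^3)$, so $\sqrt{|k|\tanh(|k|)} = |k| + O(|k|^3)$, and therefore $m_1(k) = O(|k|^{1/2})$ is bounded (in fact continuous) at the origin. For $|k|$ large, $\sqrt{\tanh(|k|)} - 1 = -e^{-2|k|} + O(e^{-4|k|})$, so $m_1(k)$ again decays exponentially. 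Hence $(1+k^2)^{s/2}|m_1(k)| \le C_s$ for every $s \ge 0$, and Plancherel yields $\|(\Omega-\Lambda)u\|_{H^s} \le C_s \|u\|_{L^2}$.

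I do not anticipate any real obstacle here: the proof is essentially a symbol calculation plus the exponential convergence $\tanh(|k|) \to 1$. The only subtle point is the behavior of $m_1$ at $k=0$, where the square root $\sqrt{|k|}$ in the symbol of $\Lambda$ is nonsmooth; however, the statement of the lemma requires only $L^\infty$ control of $(1+k^2)^{s/2} m_j(k)$, not smoothness, so this causes no issue.
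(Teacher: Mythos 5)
Your proof is correct and takes essentially the same route as the paper: both reduce to a pointwise bound on the Fourier symbol of the difference operator, using the exponentially fast convergence of $\tanh$ to its asymptotes so that $(1+k^2)^{s/2}$ times the symbol is bounded, and then conclude by Plancherel. You also carry out the $\Omega-\Lambda$ case explicitly (including the behavior at $k=0$), which the paper only asserts is "similar," so nothing is missing.
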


\begin{proof}
We give the proof of the first of these inequalities.  The second is similar.
\begin{eqnarray}
\| (\Omega^2 - \Lambda^2) u \|_{H^s}^2 & = & \int(1+k^2)^s (-k \tanh(k) + |k|)^2 |\widehat{u}(k)|^2 dk \\ \nonumber
& \le & C_1 \int(1+k^2)^s e^{-2 |k|} |\widehat{u}(k)|^2 dk \le C_s \int |\widehat{u}(k)|^2 dk = C_s \| u \|_{L^2}^2\ ,
\end{eqnarray}
where the first inequality just used the fact that the hyperbolic tangent approaches its asymptotes exponentially fast. 
\qed\end{proof}

We now look at a few commutator arguments that will be useful when estimating $\partial_t E$ in $\mathcal{V}.$

\begin{proposition} \label{prop:comm1}  There exists a constant $C>0$ such that
\begin{equation}
\| [\Lambda, f]g \|_{L^2} \le C \| (1+ | \cdot |^{1/2}) \widehat{f}(\cdot) \|_{L^1} \| g  \|_{L^2}\ .
\end{equation}
\end{proposition}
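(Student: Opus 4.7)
The plan is to pass to Fourier space, where the commutator structure reduces to an explicit kernel, estimate the symbol difference by a power of the frequency shift, and finally apply Young's inequality together with Plancherel.

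First, compute the Fourier transform of the commutator: for any Schwartz $f,g$ one has
\[
\widehat{[\Lambda,f]g}(k) = \int \bigl(i\,\mathrm{sgn}(k)\sqrt{|k|} - i\,\mathrm{sgn}(\ell)\sqrt{|\ell|}\bigr)\widehat{f}(k-\ell)\widehat{g}(\ell)\,d\ell.
\]
The whole proof reduces to controlling the symbol difference
\[
\sigma(k,\ell) := \mathrm{sgn}(k)\sqrt{|k|} - \mathrm{sgn}(\ell)\sqrt{|\ell|}
\]
by a function of $k-\ell$ alone. I would split into two cases. When $k$ and $\ell$ have the same sign, the elementary inequality $|\sqrt{a}-\sqrt{b}|\le \sqrt{|a-b|}$ (valid for $a,b\ge 0$) gives $|\sigma(k,\ell)|\le \sqrt{|k-\ell|}$. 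When $k$ and $\ell$ have opposite signs, $|k-\ell|=|k|+|\ell|$, so $\sqrt{|k|}+\sqrt{|\ell|}\le \sqrt{2}\sqrt{|k|+|\ell|}=\sqrt{2}\sqrt{|k-\ell|}$. In either case
\[
|\sigma(k,\ell)| \le \sqrt{2}\,|k-\ell|^{1/2} \le \sqrt{2}\bigl(1+|k-\ell|^{1/2}\bigr).
\]

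With this pointwise bound in hand, the integrand of $\widehat{[\Lambda,f]g}(k)$ is controlled by $\sqrt{2}\bigl(1+|k-\ell|^{1/2}\bigr)\,|\widehat{f}(k-\ell)|\,|\widehat{g}(\ell)|$, so that
\[
|\widehat{[\Lambda,f]g}(k)| \le C\Bigl[(1+|\cdot|^{1/2})|\widehat{f}|\Bigr]*|\widehat{g}|\,(k).
\]
Young's inequality in the form $\|h_1 * h_2\|_{L^2}\le \|h_1\|_{L^1}\|h_2\|_{L^2}$ then yields
\[
\|\widehat{[\Lambda,f]g}\|_{L^2} \le C\,\|(1+|\cdot|^{1/2})\widehat{f}\|_{L^1}\,\|\widehat{g}\|_{L^2},
\]
and applying Plancherel on both sides gives the stated bound.

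The only genuinely nontrivial step is the case analysis for $\sigma(k,\ell)$; the opposite-sign case might at first look like it should require cancellation, but in fact the frequency gap $|k-\ell|$ is then as large as $|k|+|\ell|$, which is exactly what saves the estimate. Everything else is standard Young/Plancherel bookkeeping.
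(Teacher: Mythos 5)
Your proof is correct and follows essentially the same route as the paper: pass to Fourier space, bound the symbol difference $\mathrm{sgn}(k)\sqrt{|k|}-\mathrm{sgn}(\ell)\sqrt{|\ell|}$ by a constant times $1+|k-\ell|^{1/2}$ via the same two-case (same sign / opposite sign) analysis, and conclude with Young's inequality and Plancherel. Your pointwise bound $|\sigma(k,\ell)|\le\sqrt{2}\,|k-\ell|^{1/2}$ is in fact marginally sharper than the paper's quotient bound, but the argument is the same.
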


\begin{proof}
We can write
\begin{eqnarray}
 && \widehat{\Lambda (fg)}(k)  - \widehat{f (\Lambda g)}(k) =  \int i (\text{sgn}(k)\sqrt{|k|} - \text{sgn}(\ell)\sqrt{|\ell||}) \widehat{f}(k-\ell) \widehat{g}(\ell) d\ell \\ \nonumber
 &&  = \int \left\{ \frac{(\text{sgn}(k)\sqrt{|k|} - \text{sgn}(\ell)\sqrt{|\ell||})}{1+|k-\ell |^{1/2}}  \right\} (1+ |k- \ell |^{1/2}) \widehat{f}(k-\ell)  \widehat{g}(\ell) d\ell \ .
\end{eqnarray}

We now use the following lemma.
\begin{lemma} \label{lemma:quotient}
\begin{equation}\label{eq:quotient}
\left| \frac{(\text{sgn}(k)\sqrt{|k|} - \text{sgn}(\ell)\sqrt{|\ell||})}{1+|k-\ell |^{1/2} }  \right| \le 3
\end{equation}
for all $\ell$ and $k$.
\end{lemma}

We prove the lemma below, but first note that from the lemma we have
\begin{equation}
 \widehat{\Lambda (fg)}(k)  - \widehat{f (\Lambda g)}(k) \le C \int |(1+ |k- \ell |^{1/2}) \widehat{f}(k-\ell)   \widehat{g}(\ell) | d\ell .
\end{equation}
Taking the $L^2$ norm of both sides and applying Young's inequality gives
\begin{equation}
\| \Lambda (fg) - f (\Lambda g) \|_{L^2} \le C  \| (1+ | \cdot |^{1/2}) \widehat{f}(\cdot) \|_{L^1} \|  \widehat{g}  \|_{L^2}\ .
\end{equation}

\qed\end{proof}

\begin{proof}[(Proof of Lemma \ref{lemma:quotient})]
We consider two cases:

Case 1: $k\ell>0.$ Since $\sqrt{|k|} = \sqrt{|k-\ell + \ell|} \leq \sqrt{|k-\ell|} + \sqrt{ |\ell|},$ we have \[ \left|  \frac{\sqrt{k}-\sqrt{\ell}}{1+ \sqrt{|k-\ell|}} \right| \leq 1\]

Case 2: $k\ell<0.$ Since $\text{sgn}(k) \neq \text{sgn}(\ell),$
\begin{equation}
\left| \frac{(\text{sgn}(k)\sqrt{|k|} - \text{sgn}(\ell)\sqrt{|\ell||})}{1+|k-\ell |^{1/2} }  \right| = \left| \frac{(\sqrt{|k|} + \sqrt{|\ell||})}{1+\sqrt{|k|+|\ell|} }  \right| \leq 3
\end{equation}

\qed\end{proof}

\begin{corollary} \label{prop:commcorr}
There exists a constant $C>0$ such that 
\begin{equation}
\| [\Omega,f] g \|_{L^2} \le C \| f \|_{H^{1+\delta}} \| g  \|_{L^2}\ 
\end{equation}
where $\delta > 0.$
\end{corollary}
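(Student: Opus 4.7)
The plan is to deduce this corollary from Proposition~\ref{prop:comm1} together with Lemma~\ref{lem:approx}, by splitting the commutator according to the decomposition $\Omega = \Lambda + (\Omega - \Lambda)$, namely
\[
[\Omega, f]g = [\Lambda, f]g + [\Omega - \Lambda, f]g.
\]
The first piece will be controlled by the proposition, and the second by the fact that $\Omega - \Lambda$ is infinitely smoothing.

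For $[\Lambda, f]g$, the proposition gives $\|[\Lambda, f]g\|_{L^2} \le C \|(1 + |\cdot|^{1/2}) \widehat{f}\|_{L^1} \|g\|_{L^2}$, so I would estimate the $L^1$ norm of $(1 + |k|^{1/2})\widehat{f}(k)$ by $\|f\|_{H^{1+\delta}}$ via Cauchy--Schwarz:
\[
\int (1 + |k|^{1/2}) |\widehat{f}(k)|\, dk \le \Bigl(\int \frac{(1 + |k|^{1/2})^2}{(1+k^2)^{1+\delta}}\, dk \Bigr)^{1/2} \|f\|_{H^{1+\delta}}.
\]
The integrand is bounded near the origin and decays like $|k|^{-1-2\delta}$ at infinity, so the integral converges for any $\delta > 0$, yielding $\|[\Lambda, f]g\|_{L^2} \lesssim \|f\|_{H^{1+\delta}} \|g\|_{L^2}$.

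For $[\Omega - \Lambda, f]g$, I would write it as $(\Omega - \Lambda)(fg) - f(\Omega - \Lambda)g$ and apply Lemma~\ref{lem:approx} (in particular its $L^2$ form, which follows from exactly the same proof by taking $s=0$):
\[
\|(\Omega - \Lambda)(fg)\|_{L^2} \le C \|fg\|_{L^2} \le C \|f\|_{L^\infty} \|g\|_{L^2},
\]
and similarly $\|f(\Omega - \Lambda)g\|_{L^2} \le \|f\|_{L^\infty} \|(\Omega - \Lambda) g\|_{L^2} \le C\|f\|_{L^\infty} \|g\|_{L^2}$. Since the Sobolev embedding $H^{1+\delta}(\mathbb{R}) \hookrightarrow L^\infty(\mathbb{R})$ holds (any $\delta > -1/2$ suffices in one dimension), we can replace $\|f\|_{L^\infty}$ by $\|f\|_{H^{1+\delta}}$ to obtain the same bound for this piece.

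Summing the two contributions gives the claimed inequality. The only subtlety is making sure the power $1/2$ in the proposition's weight is compatible with $H^{1+\delta}$ control of $f$, which is exactly why the threshold $\delta > 0$ appears (any smaller power of the derivative than $1$ would fail to make the Cauchy--Schwarz integral finite uniformly in $k$). No other step presents a real obstacle.
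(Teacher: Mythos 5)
Your proposal is correct and follows exactly the route the paper intends (the corollary is stated without an explicit proof, but it is clearly meant to follow from Proposition~\ref{prop:comm1} combined with Lemma~\ref{lem:approx}): split $\Omega = \Lambda + (\Omega-\Lambda)$, bound $\|(1+|\cdot|^{1/2})\widehat{f}\|_{L^1}$ by $\|f\|_{H^{1+\delta}}$ via Cauchy--Schwarz, and absorb the smoothing remainder using the Sobolev embedding $H^{1+\delta}(\mathbb{R})\hookrightarrow L^\infty(\mathbb{R})$. Nothing further is needed.
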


\begin{proposition}  There exists a constant $C>0$ such that
\begin{equation}
\| \Omega^2 (fg) - f (\Omega^2 g) \|_{L^2} \le C \| (1+ | \cdot |) \widehat{f}(\cdot) \|_{L^1} \| g  \|_{L^2}\ .
\end{equation}
\end{proposition}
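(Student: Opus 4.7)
The plan is to write the commutator on the Fourier side and reduce the estimate to a Lipschitz bound on the symbol $h(k) = k\tanh(k)$. Since $\widehat{\Omega^2 u}(k) = -k\tanh(k)\widehat{u}(k) = -h(k)\widehat{u}(k)$, taking Fourier transforms of the commutator gives
\[
\widehat{\Omega^2(fg) - f\, \Omega^2 g}(k) = \int \bigl(h(\ell) - h(k)\bigr)\, \widehat{f}(k-\ell)\, \widehat{g}(\ell)\, d\ell.
\]

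The key step is then to establish that $h$ is globally Lipschitz on $\mathbb{R}$, i.e. $|h(k) - h(\ell)| \le C |k-\ell|$. This follows from the mean value theorem together with the uniform bound $\|h'\|_{L^\infty} < \infty$, since $h'(x) = \tanh(x) + x \operatorname{sech}^2(x)$ where $|\tanh(x)| \le 1$ and $x\operatorname{sech}^2(x)$ is a bounded, continuous function vanishing at the origin and decaying exponentially as $|x|\to\infty$. This step is routine but is the only non-trivial input; it is analogous to, but simpler than, Lemma~\ref{lemma:quotient}, because the symbol of $\Omega^2$ has (essentially) linear growth rather than the square-root growth of $\Lambda$, so no denominator $1 + |k-\ell|^{1/2}$ needs to be introduced.

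With the Lipschitz bound in hand, we estimate
\[
\bigl|\widehat{\Omega^2(fg) - f\,\Omega^2 g}(k)\bigr| \le C \int |k-\ell|\, |\widehat{f}(k-\ell)|\, |\widehat{g}(\ell)|\, d\ell = C\bigl( |\cdot|\, |\widehat{f}| * |\widehat{g}|\bigr)(k).
\]
Taking the $L^2_k$ norm and applying Young's inequality $\|u*v\|_{L^2} \le \|u\|_{L^1}\|v\|_{L^2}$, then invoking Plancherel, yields
\[
\|\Omega^2(fg) - f\,\Omega^2 g\|_{L^2} \le C \bigl\||\cdot|\,\widehat{f}(\cdot)\bigr\|_{L^1} \|\widehat{g}\|_{L^2} \le C\, \|(1+|\cdot|)\widehat{f}(\cdot)\|_{L^1}\|g\|_{L^2},
\]
which is the desired inequality. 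No genuine obstacle arises; the argument is a direct adaptation of the proof of Proposition~\ref{prop:comm1}, with the simpler symbol $k\tanh(k)$ producing the factor $(1+|k-\ell|)$ instead of $(1+|k-\ell|^{1/2})$ on the right-hand side.
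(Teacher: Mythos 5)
Your proof is correct and follows essentially the same route the paper intends for this estimate (which it leaves unproved as a routine variant of Proposition \ref{prop:comm1}): pass to the Fourier side, bound the symbol difference $|k\tanh(k)-\ell\tanh(\ell)|\le C|k-\ell|$ via the uniformly bounded derivative, and conclude with Young's inequality and Plancherel. No gaps.
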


\begin{proposition} \label{prop:comm2} There exists a constant $C>0$ such that
\begin{equation}
\| \Lambda (\partial_x^2(gf) - g (\partial_x^2 f)) \|_{L^2} \le C \| (1+ | \cdot |^{5/2}) \widehat{g}(\cdot) \|_{L^1} \| f  \|_{H^{3/2}}\ .
\end{equation}
\end{proposition}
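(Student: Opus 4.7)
The plan is to work entirely in Fourier space and reduce the estimate to four Young-type convolution bounds.

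First I would write, in Fourier variables,
\[
\widehat{\partial_x^2(gf) - g \partial_x^2 f}(k) = \int (\ell^2 - k^2)\, \widehat{g}(k-\ell)\widehat{f}(\ell)\, d\ell = -\int (k-\ell)(k+\ell)\, \widehat{g}(k-\ell)\widehat{f}(\ell)\, d\ell,
\]
and then multiply by the symbol of $\Lambda$, i.e.\ by $i\,\mathrm{sgn}(k)\sqrt{|k|}$. Taking absolute values, it is enough to control the $L^2_k$-norm of
\[
\sqrt{|k|} \int \bigl|(k-\ell)(k+\ell)\bigr|\,|\widehat{g}(k-\ell)|\,|\widehat{f}(\ell)|\, d\ell.
\]

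Next I would split this integrand using two elementary pointwise bounds. For the $\sqrt{|k|}$ factor I use the subadditivity identity $\sqrt{|k|}\le\sqrt{|k-\ell|}+\sqrt{|\ell|}$ already exploited in Lemma \ref{lemma:quotient}. For the polynomial factor I write $k+\ell = (k-\ell)+2\ell$, so that
\[
(k-\ell)(k+\ell) = (k-\ell)^2 + 2(k-\ell)\ell.
\]
Multiplying these two decompositions out produces four terms, in each of which all powers of $k$ have been redistributed onto $|k-\ell|$ and $|\ell|$ alone — no $|k|$ remains. Explicitly, the four integrands are controlled by
\[
|k-\ell|^{5/2}|\widehat{g}(k-\ell)||\widehat{f}(\ell)|,\ \ |k-\ell|^{2}|\widehat{g}(k-\ell)|\,|\ell|^{1/2}|\widehat{f}(\ell)|,
\]
\[
|k-\ell|^{3/2}|\widehat{g}(k-\ell)|\,|\ell|\,|\widehat{f}(\ell)|,\ \ |k-\ell|\,|\widehat{g}(k-\ell)|\,|\ell|^{3/2}|\widehat{f}(\ell)|.
\]

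Each of these is a convolution of the form $(h_g * h_f)(k)$, so Young's inequality $\|h_g * h_f\|_{L^2}\le \|h_g\|_{L^1}\|h_f\|_{L^2}$ applies. The first term is bounded by $\||\cdot|^{5/2}\widehat{g}\|_{L^1}\|f\|_{L^2}$, the second by $\||\cdot|^{2}\widehat{g}\|_{L^1}\|f\|_{H^{1/2}}$, the third by $\||\cdot|^{3/2}\widehat{g}\|_{L^1}\|f\|_{H^{1}}$, and the fourth by $\||\cdot|\widehat{g}\|_{L^1}\|f\|_{H^{3/2}}$. In every case the right-hand side is dominated by $C\|(1+|\cdot|^{5/2})\widehat{g}\|_{L^1}\|f\|_{H^{3/2}}$, which is exactly the bound claimed in the proposition.

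I do not anticipate a serious obstacle: the whole argument is a careful bookkeeping of how the two-and-a-half derivatives arising from $\Lambda\partial_x^2$ get redistributed between $g$ and $f$ via the symbol manipulations $\sqrt{|k|}\le\sqrt{|k-\ell|}+\sqrt{|\ell|}$ and $k+\ell=(k-\ell)+2\ell$. The only point where one must be a little careful is ensuring that after the splits no leftover factor of $|k|$ remains before applying Young's inequality; this is automatic once the two pointwise bounds above are combined. The $3/2$ on $\|f\|_{H^{3/2}}$ is sharp in this accounting because the worst term (the fourth) places all the regularity on $f$ and only one derivative on $g$; the $5/2$ on $\widehat g$ is sharp for the opposite reason (first term).
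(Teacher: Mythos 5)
Your proposal is correct and follows essentially the same route as the paper's proof: write the commutator in Fourier variables, use $\sqrt{|k|}\le\sqrt{|k-\ell|}+\sqrt{|\ell|}$ together with $k^2-\ell^2=(k-\ell)\bigl((k-\ell)+2\ell\bigr)$ to redistribute the $5/2$ powers between $\widehat{g}$ and $\widehat{f}$, and conclude with Young's inequality. The only difference is presentational: the paper absorbs your four terms into the single bound $(1+|k-\ell|^{5/2})(1+|\ell|^{3/2})$ before applying Young, whereas you enumerate them explicitly.
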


\begin{proof}
We can write
\begin{eqnarray}
 && \widehat{\Lambda \partial_x^2(gf))}(k)  -\widehat{\Lambda( g (\partial_x^2 f))}(k) =  \int i \text{sgn}(k)\sqrt{|k|}(k^2-\ell^2)  \widehat{g}(k-\ell) \widehat{f}(\ell) d\ell \\ 
 &&  \leq C \int  (\sqrt{|k-\ell|} + \sqrt{|\ell|})(k-\ell+2\ell)(k-\ell)  \widehat{g}(k-\ell) \widehat{f}(\ell) d\ell \\
&& \leq C \int (1+|k-\ell|^{5/2})\widehat{g}(k-\ell)(1+|\ell|^{3/2}) \widehat{f}(\ell) d\ell
\end{eqnarray}
Taking the $L^2$ norm of both sides and applying Young's inequality gives
\begin{equation}
\| \Lambda (\partial_x^2(gf) - g (\partial_x^2 f)) \|_{L^2} \le C \| (1+ | \cdot |^{5/2}) \widehat{g}(\cdot) \|_{L^1} \| f  \|_{H^{3/2}}\ .
\end{equation}
\qed\end{proof}

The following estimate is used to show $E(t)$ is equivalent to $\|R\|_{H^2}.$

\begin{proposition} \label{prop:modenergy} There exists a constant $C>0,$  $\gamma>0,$ and $r>2$ such that
\begin{align}
E_2 &= \sum_{j=1}^2 \int\displaylimits_{|k| < \delta}   \epsilon \conj{\widehat{F}_j} B_j(\widehat{G}^c, \widehat{F})+ \epsilon \widehat{F}_j\conj{ B_j(\widehat{G}^c, \widehat{F})} + 2\epsilon^2\conj{B_j(\widehat{G}^c, \widehat{F})}B_j(\widehat{G}^c, \widehat{F})\; dk \\
& \leq C \delta^{\gamma} \left(\|\widehat{A}\|_{L^r} + \|\widehat{A}\|_{L^r}^2 \right) \|F\|_{L^2}^2 \end{align}
\end{proposition}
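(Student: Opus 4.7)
The plan is to estimate $B_j$ directly in $L^2(|k|<\delta)$ and then use Cauchy--Schwarz on the three integrands defining $E_2$. The key structural features to exploit are: (i) a cancellation between the factor $\omega_j(k)$ in the numerator and the vanishing of $\phi^j_{mn}(k,\ell)$ on the line $\{k=0\}$ when $m=n$; (ii) the fact that $\widehat{G}^c$ is concentrated on an $\epsilon$-neighborhood of $\pm k_0$ with amplitude $\sim\epsilon^{-1}$, so that a rescaling turns integrals against $\widehat{G}^c$ into integrals against $\widehat{A}$; and (iii) the fact that $\widehat{\vartheta}(\ell)=1$ on the support of $\widehat{G}^c(k-\ell)$ (since $|k|<\delta$ forces $\ell\approx \mp k_0$).

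First I would classify the kernel $K(k,\ell)=\omega_j(k)\widehat{\vartheta}(\ell)\widehat{G}^c_m(k-\ell)/(\phi^j_{mn}(k,\ell)\widehat{\vartheta}(k))$ according to $(m,n)$. Direct inspection of $\phi^j_{mn}(0,\pm k_0)$ using $\omega_1=\omega$, $\omega_2=-\omega$, and $\omega$ odd shows that for the non-resonant pairs $\{(1,2),(2,1)\}$ the phase is bounded below by a constant, while the size $|\omega_j(k)/\widehat{\vartheta}(k)|\lesssim\delta$ on $|k|<\delta$ (coming from $\omega_j(k)\sim k$ near $0$ and the explicit form of $\widehat{\vartheta}$) gives $|K|\lesssim \delta\,|\widehat{G}^c(k-\ell)|$. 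For the resonant pairs $\{(1,1),(2,2)\}$ one Taylor-expands in $k$ to get $\phi^j_{mn}(k,\ell)\sim k(\pm c_g-1)$ near $(0,\pm k_0)$, so $\omega_j(k)/\phi^j_{mn}(k,\ell)$ stays bounded and $|K|\lesssim \widehat{\vartheta}(k)^{-1}|\widehat{G}^c(k-\ell)|$.

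Next I would perform the rescaling step. Substituting $\ell=(k\mp k_0)-\epsilon\xi$ on the branch of $\widehat{G}^c$ supported near $\pm k_0$ transforms $\epsilon^{-1}\widehat{A}(\xi)$ into the effective density; then Hölder with exponents $(r,r')$, $r>2$, combined with the Jacobian and the estimate $\|\widehat{F}\|_{L^{r'}(I_\epsilon)}\leq (2\epsilon)^{1/r'-1/2}\|\widehat{F}\|_{L^2}$ on the $\epsilon$-size interval $I_\epsilon$ (requires $r'<2$), yields the pointwise bound
\[
\int |\widehat{G}^c_m(k-\ell)\widehat{F}_n(\ell)|\,d\ell \;\leq\; C\,\epsilon^{-1/2}\,\|\widehat{A}\|_{L^r}\,\|\widehat{F}\|_{L^2}.
\]
Combining with Step 1, the non-resonant contribution satisfies $\|B_j\|_{L^2(|k|<\delta)}\lesssim \delta^{3/2}\epsilon^{-1/2}\|\widehat{A}\|_{L^r}\|\widehat{F}\|_{L^2}$, while the resonant contribution is controlled by $\|\widehat{\vartheta}^{-1}\|_{L^2(|k|<\delta)}\sim (\delta/\epsilon)^{1/2}$, giving $\|B_j\|_{L^2(|k|<\delta)}\lesssim \delta^{1/2}\epsilon^{-1}\|\widehat{A}\|_{L^r}\|\widehat{F}\|_{L^2}$.

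Finally, Cauchy--Schwarz in $k$ on the first two terms of $E_2$ and the elementary $|B_j|^2$ inequality on the third give
\[
\epsilon\int_{|k|<\delta}|\widehat{F}_j\,B_j|\,dk\leq \epsilon\|\widehat{F}\|_{L^2}\|B_j\|_{L^2(|k|<\delta)}\lesssim \delta^{1/2}\|\widehat{A}\|_{L^r}\|\widehat{F}\|_{L^2}^2,
\]
and $\epsilon^2\|B_j\|_{L^2(|k|<\delta)}^2\lesssim \delta\|\widehat{A}\|_{L^r}^2\|\widehat{F}\|_{L^2}^2$, so summing over $j$ yields the Proposition with $\gamma=1/2$. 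The main obstacle is the resonant case: the vanishing of $\phi^j_{mn}$ on $\{k=0\}$ and the $\epsilon^{-1}$ blowup of $\widehat{\vartheta}^{-1}$ near $0$ would each prevent closure on their own; the argument succeeds only because $\omega_j(k)$ vanishes linearly at $0$ (curing $1/\phi^j_{mn}$) and because the prefactors of $\epsilon$ and $\epsilon^2$ in $E_2$ exactly compensate the $\epsilon^{-1}$ coming from the combination of the rescaling and the $L^2$-norm of $\widehat{\vartheta}^{-1}$.
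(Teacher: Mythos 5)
Your overall architecture---boundedness of $\omega_j(k)/\phi^j_{mn}(k,\ell)$ on $\{|k|<\delta\}$ (your resonant/non-resonant case analysis, using $c_g\neq 1$, is correct and in fact more explicit than the paper's one-line assertion), the rescaling of $\widehat{G}^c$ to $\widehat{A}$, the computation $\|\widehat{\vartheta}^{-1}\|_{L^2(|k|<\delta)}\sim(\delta/\epsilon)^{1/2}$, and a final Cauchy--Schwarz in $k$---is sound and close in spirit to the paper's proof, which instead pairs $|\widehat{F}_j|\,h$ with $h(k)=\chi_{[-\delta,\delta]}(k)\,\epsilon\,\widehat{\vartheta}(k)^{-1}$ in $L^p$, $p<2$, against the convolution $\widehat{G}^c\ast\widehat{F}$ in $L^q$ via H\"older and Young. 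However, there is a genuine gap at your Step 2. The inequality $\|\widehat{F}\|_{L^{r'}(I_\epsilon)}\le(2\epsilon)^{1/r'-1/2}\|\widehat{F}\|_{L^2}$ is used as if $\widehat{G}^c(k-\cdot)$ were supported on an interval of length $\sim\epsilon$; in fact $\mathrm{supp}\,\widehat{G}^c$ is a $\delta$-neighborhood of $\pm k_0$ (the cut-off is at the $\epsilon$-independent scale $\delta$), and $\widehat{G}^c$ is only \emph{concentrated} at scale $\epsilon$, with tails filling the whole $\delta$-window. With the honest support one gets $\int|\widehat{G}^c(k-\ell)\widehat{F}(\ell)|\,d\ell\le\|\widehat{G}^c\|_{L^r}\|\widehat{F}\|_{L^{r'}(I_\delta)}\lesssim\epsilon^{1/r-1}\delta^{1/r'-1/2}\|\widehat{A}\|_{L^r}\|\widehat{F}\|_{L^2}$, and since $1/r-1=-1/r'<-1/2$ when $r>2$, your resonant contribution to $\epsilon\int_{|k|<\delta}|\widehat{F}_jB_j|\,dk$ comes out as $O(\epsilon^{1/2-1/r'})$, which diverges as $\epsilon\to0$: the exponents do not cancel as claimed.

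The fix is to flip the exponents so that $\widehat{F}$ is measured in $L^2$ rather than in $L^{r'}$ with $r'<2$. Either bound the $\ell$-integral pointwise in $k$ by Cauchy--Schwarz, $\int|\widehat{G}^c(k-\ell)\widehat{F}(\ell)|\,d\ell\le\|\widehat{G}^c\|_{L^2}\|\widehat{F}\|_{L^2}\le C\epsilon^{-1/2}\|\widehat{A}\|_{L^2}\|\widehat{F}\|_{L^2}$, after which your bookkeeping closes verbatim and yields $\gamma=1/2$ with $\|\widehat{A}\|_{L^2}$; or, as the paper does, apply Young's inequality $\|\widehat{G}^c\ast\widehat{F}\|_{L^q}\le\|\widehat{G}^c\|_{L^r}\|\widehat{F}\|_{L^2}$ with $q>2$ and $1/r=1/q+1/2$ (so $1<r<2$), where the scaling loss $\|\widehat{G}^c\|_{L^r}\lesssim\epsilon^{1/q-1/2}\|\widehat{A}\|_{L^r}$ is exactly compensated by $\|h\|_{L^{pt}}\lesssim(\epsilon\delta)^{1/(pt)}$, producing $\gamma=1/p-1/2$. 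Note in this connection that your choice $r>2$ (presumably to match the statement) is not really available: for $A$ merely bounded in $H^6$, $\widehat{A}\in L^1\cap L^2$ but need not lie in any $L^r$ with $r>2$, and the ``$r>2$'' in the proposition is an inconsistency with the paper's own proof, which genuinely uses $1<r<2$.
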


\begin{proof}
Consider one term of the form
\[\begin{aligned}
&  \int\displaylimits_{|k| < \delta} \int_{\mathbb{R}} \conj{\widehat{F}_j(k)}\frac{\omega_j(k)}{\phi^{j}_{mn}(k,\ell)} \cdot \frac{\epsilon \delta}{\epsilon \delta + (1-\epsilon)|k|}    \widehat{G}_m^c(k-\ell)\widehat{F}_n(\ell) \;d\ell dk \\
&=   \iint_{\mathbb{R}^2} \chi_{[-\delta,\delta]}(k) \conj{\widehat{F}_j(k)}\frac{\omega_j(k)}{\phi^{j}_{mn}(k,\ell)} \cdot \frac{\epsilon \delta}{\epsilon \delta + (1-\epsilon)|k|}    \widehat{G}_m^c(k-\ell)\widehat{F}_n(\ell) \;d\ell dk \\
\end{aligned}\]

Apply Holder's inequality to bound this by 

\[    C \| |\widehat{F}_j| \cdot |h|\|_{L^p} \| \widehat{G}_m^c \ast \widehat{F}_n \|_{L^q }  \]

where $h(k) =  \chi_{[-\delta,\delta]}(k)    \frac{\epsilon \delta}{\epsilon \delta + (1-\epsilon)|k|}   $ and we have bounded  $ \left| \frac{\omega_j(k)}{\phi^{j}_{mn}(k,\ell)} \right| < C .$ Here $\frac{1}{p}+ \frac{1}{q} = 1$ and we chose $1 < p < 2$ (and hence $q>2$). Now apply Young's inequality to bound 

\begin{equation} \| \widehat{G}_m^c \ast \widehat{F}_n \|_{L^q }  \leq C  \| \widehat{G}_m^c \|_{L^r}     \| \widehat{F}_n \|_{L^2 }  \label{conv} \end{equation}
 
with $\frac{1}{r}= \frac{1}{q} + \frac{1}{2}.$ Now we need the following lemma.

\begin{lemma} \label{lemmaA}
 \begin{equation}\| \widehat{G}_m^c \|_{L^r} \leq C \epsilon^{\frac{1}{q} - \frac{1}{2}} \| \widehat{A}\|_{L^r} \label{gtoa}\end{equation}
\end{lemma}
(Here A is the profile of the approximate solution $\Psi.$ Note that since $q > 2,$ this grows as $\epsilon \rightarrow 0.$

We prove the lemma below, but now consider 
\[\begin{aligned}
\| |\widehat{F}_j| \cdot |h|\|_{L^p}^p &= \int  |\widehat{F}_j(k)|^p  |h(k)|^p dk\\
&\leq \left( \int  |\widehat{F}_j(k)|^{ps} dk \right)^{\frac{1}{s}} \left( \int  |h(k)|^{pt} dk\right)^{\frac{1}{t}}
\end{aligned}\]
with $\frac{1}{s} + \frac{1}{t} = 1,$ by Holder's inequality. Choose $ps = 2. $ Then 
\begin{equation}
\| |\widehat{F}_j| \cdot |h|\|_{L^p} \leq \| \widehat{F}_j(k)\|_{L^2} \left( \int  |h(k)|^{pt} dk\right)^{\frac{1}{pt}}. \label{cutoff}
\end{equation}
Now 
\[\begin{aligned}
 \int  |h(k)|^{pt} dk & = 2 \int_0^\delta \left(\frac{\epsilon \delta}{\epsilon \delta + (1-\epsilon) k}\right)^{pt} dk \\
& = 2 \int_0^\delta \left(\frac{1}{1 + (1-\epsilon) \left(\frac{k}{\epsilon \delta}\right)}\right)^{pt} dk \\
& = 2 \epsilon \delta \int_0^{1/\epsilon} \left( 1 + (1-\epsilon)x\right)^{-pt} dx.
\end{aligned}\]

Since $pt > 1,$ the integral over $x$ is convergent and can be bounded independent of $\epsilon$ if $0 < \epsilon < \frac{1}{2}.$ Thus 

\[   \left( \int  |h(k)|^{pt} dk\right)^{\frac{1}{pt}} \leq C (\epsilon \delta)^{\frac{1}{pt}} \leq C \delta^{\frac{1}{pt}} \epsilon^{\frac{1}{pt}}
  \]

Combining this with (\ref{conv})-(\ref{cutoff}) we have 

\[\begin{aligned}
\bigg| \int\displaylimits_{|k| < \delta} \int_{\mathbb{R}} &\conj{\widehat{F}_j(k)}\frac{\omega_j(k)}{\phi^{j}_{mn}(k,\ell)} \cdot \frac{\epsilon \delta}{\epsilon \delta + (1-\epsilon)|k|}    \widehat{G}_m^c(k-\ell)\widehat{F}_n(\ell) \;d\ell dk \bigg|\\
& \leq  C \delta^{\frac{1}{pt}} \epsilon^{\frac{1}{pt}+ \frac{1}{q} -\frac{1}{2}} \|\widehat{A}\|_{L^r} \| \widehat{F}\|^2_{L^2}
\end{aligned}\]

Now recall that $\frac{1}{t} = 1 - \frac{1}{s}$ so 
\[  \frac{1}{pt} = \frac{1}{p} - \frac{1}{ps} = \frac{1}{p} - \frac{1}{2} \]
since $ps = 2.$ Thus, 
\[ \frac{1}{pt}+ \frac{1}{q} -\frac{1}{2}= \left( \frac{1}{p} - \frac{1}{2} \right) + \frac{1}{q} -\frac{1}{2} = \left( \frac{1}{p} + \frac{1}{q}    \right) - \frac{1}{2} - \frac{1}{2} = 0. \]

Thus, we have
\[ \bigg| \int\displaylimits_{|k| < \delta} \int_{\mathbb{R}} \conj{\widehat{F}_j(k)}\frac{\omega_j(k)}{\phi^{j}_{mn}(k,\ell)} \cdot \frac{\epsilon \delta}{\epsilon \delta + (1-\epsilon)|k|}    \widehat{G}^c_m(k-\ell)\widehat{F}_n(\ell) \;d\ell dk \bigg| \leq C \delta^{\frac{1}{pt}} \|\widehat{A}\|_{L^r} \|F\|_{L^2}^2 \]

The second term in the energy can be bounded similarly. \qed

\end{proof}

\begin{proof}[(Proof of Lemma \ref{lemmaA})] Taking the Fourier transform of $G_m^c$ we find that,
\[ \widehat{G}_m^c(k) =  \frac{1}{\epsilon}\widehat{A}\left(\frac{k-k_0}{\epsilon}\right) e^{i\omega_0 t}e^{ic_g(k-k_0)t}  \]
plus a term localized around $-k_0.$ Therefore,
\[\begin{aligned} \| \widehat{G}_m^c \|_{L^r}^r & \leq C \int \frac{1}{\epsilon^r} \left|\widehat{A}\left(\frac{k-k_0}{\epsilon}\right)\right|^r dk\\
& = \epsilon^{1-r} \int \left|\widehat{A}\left(p\right)\right|^r dp	\\
& = \epsilon^{1-r} \| \widehat{A}\|_{L^r}.
\end{aligned}\]
The lemma then follows by taking the $r^{th}$ roots of both sides and using 
\[\frac{1}{r}-1 = \left(\frac{1}{q} + \frac{1}{2}\right) - 1 = \frac{1}{q} - \frac{1}{2}.\] \qed

\end{proof}
The next proposition and its proof are similar to Lemma 9 in \cite{CEW} but for clarity we state the lemma for the specific case needed in this paper.

\begin{proposition} \label{prop:lemma9}  Assume that $G^{c,n}$ is defined from the approximation (\ref{plusminusg}) and that $F \in L^2.$ Then there exists $C>0$ such that
\[  \left\| \int (\widehat{\vartheta}(\ell)-\vartheta(\cdot - nk_0)) \widehat{G}^{c,n}(\cdot-\ell)  \widehat{F}(\ell)  \;d\ell \right\|_{L^2} \leq C \epsilon \|F\|_{L^2}
\]
with $\widehat{\vartheta} (k)$ defined as in (\ref{vartheta}).
\end{proposition}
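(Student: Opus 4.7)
The plan is to exploit two facts: (i) the weight $\widehat{\vartheta}$ is globally Lipschitz with Lipschitz constant at most $1/\delta$, which is visible directly from (\ref{vartheta}); and (ii) $\widehat{G}^{c,n}$ is a concentration-rescaling of $\widehat{A}$ around $nk_0$ at scale $\epsilon$, exactly as computed in the proof of Lemma \ref{lemmaA}. The gain of $\epsilon$ in the stated bound will come from combining these two pieces: the Lipschitz difference contributes a factor $\epsilon |\xi|/\delta$ after rescaling, and the $\delta$'s will drop out while the $\epsilon$ survives.

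Concretely, first I would substitute the representation
\[ \widehat{G}^{c,n}(k-\ell) = \frac{1}{\epsilon}\widehat{A}\!\left(\frac{k-\ell-nk_0}{\epsilon}\right) e^{i\Theta(k-\ell,t)}, \]
where $\Theta$ is the real phase from Lemma \ref{lemmaA}, and perform the change of variables $\xi = (k-\ell-nk_0)/\epsilon$. The Jacobian $\epsilon\, d\xi$ exactly cancels the prefactor $1/\epsilon$, converting the inner integral into
\[ \int \bigl(\widehat{\vartheta}(k-nk_0-\epsilon\xi) - \widehat{\vartheta}(k-nk_0)\bigr)\,\widehat{A}(\xi)\,\widehat{F}(k-nk_0-\epsilon\xi)\,e^{i\Theta}\,d\xi. \]

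Next, since $\widehat{\vartheta}$ is piecewise linear with slope at most $1/\delta$ on $[-\delta,\delta]$ and constant elsewhere, I would apply the uniform Lipschitz bound
\[ \bigl|\widehat{\vartheta}(k-nk_0-\epsilon\xi) - \widehat{\vartheta}(k-nk_0)\bigr| \;\leq\; \frac{\epsilon |\xi|}{\delta}. \]
Taking absolute values, taking the $L^2$ norm in $k$, and invoking Minkowski's integral inequality (so the $L^2$-norm passes inside the $\xi$-integral) yields
\[ \left\|\int\cdots\,d\xi\right\|_{L^2_k} \;\leq\; \frac{\epsilon}{\delta}\,\|F\|_{L^2}\int |\xi|\,|\widehat{A}(\xi)|\,d\xi, \]
since translation invariance of the $L^2$-norm kills the shift by $k-nk_0-\epsilon\xi$ in $\widehat{F}$.

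Finally, to close the estimate I would observe that the hypothesis $A\in H^6$ makes $\int |\xi|\,|\widehat{A}(\xi)|\,d\xi$ finite by Cauchy--Schwarz against $(1+\xi^2)^{-3}$, bounding it by a constant multiple of $\|A\|_{H^6}$ which is uniform in $t$ by the theorem's hypothesis. The factor $1/\delta$ is absorbed into the constant $C$, which is allowed since $\delta$ is fixed independent of $\epsilon$ in the statement. The main (only) subtlety is choosing to take the Lipschitz estimate after the rescaling rather than before: a naive Lipschitz bound directly on the original variables gives $|\widehat{\vartheta}(\ell)-\widehat{\vartheta}(k-nk_0)| \lesssim 1$ on the support of $\widehat{G}^{c,n}$, losing the factor of $\epsilon$; the rescaling is what turns the Lipschitz distance from $\mathcal{O}(\delta)$ into $\mathcal{O}(\epsilon|\xi|)$ and thereby produces the required gain.
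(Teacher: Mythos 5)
Your proof is correct and takes essentially the same route as the paper's: represent $\widehat{G}^{c,n}$ as the $\epsilon$-rescaled profile $\frac{1}{\epsilon}\widehat{A}\bigl(\tfrac{\cdot-nk_0}{\epsilon}\bigr)$ times a unimodular phase, use the Lipschitz property of $\widehat{\vartheta}$, and extract the factor $\epsilon$ from the first moment $\int|\xi|\,|\widehat{A}(\xi)|\,d\xi$, which is finite since $A\in H^6$ (the paper applies Lipschitz in the original variables and then Young's inequality plus the substitution $p=\epsilon m$, whereas you rescale first and use Minkowski's integral inequality, but these are the same estimate). The only slightly off point is your closing remark: the paper's Lipschitz bound in the original variables does not lose the $\epsilon$, because it keeps the factor $|k-nk_0-\ell|$ rather than bounding it by a constant, and the scale-$\epsilon$ concentration of $\widehat{A}$ then supplies the smallness, so the order of rescaling versus Lipschitz is purely cosmetic.
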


\begin{proof}

We calculate directly

\[ \begin{aligned}
& \left\| \int (\widehat{\vartheta}(\cdot - nk_0)-\widehat{\vartheta}(\ell)) \widehat{G}^{c,n}(\cdot-\ell)  \widehat{F}(\ell)  \;d\ell \right\|_{L^2}^2 \\
&= \int \left(\int (\widehat{\vartheta}(k - nk_0)-\widehat{\vartheta}(\ell)) \widehat{G}^{c,n}(k-\ell)  \widehat{F}(\ell)  \;d\ell \right)^2 dk \\
&= \int \left(\int (\widehat{\vartheta}(k - nk_0)-\widehat{\vartheta}(\ell)) \frac{1}{\epsilon}\widehat{A}\left(\frac{k-\ell-nk_0}{\epsilon}\right) e^{i\omega_0 t}e^{ic_g(k-\ell-nk_0)t} \widehat{F}(\ell)  \;d\ell \right)^2 dk \\
&\leq \int \left( C_{\vartheta} \int \left| \frac{(k - nk_0)-\ell }{\epsilon}\right| \left|\widehat{A}\left(\frac{k-\ell-nk_0}{\epsilon}\right)\right|\left| \widehat{F}(\ell)\right|  \;d\ell \right)^2 dk \\
&\leq C_{\vartheta}^2 \int \left(  \int \left|\frac{p }{\epsilon}\right| \left|\widehat{A}\left(\frac{p}{\epsilon}\right)\right| \;dp \right)^2 \|F\|_{L^2}^2  \\
&\leq C_{\vartheta}^2 \int \left(  \epsilon\int \left|m\right| \left|\widehat{A}\left(m\right)\right| \;dm \right)^2 \|F\|_{L^2}^2  \\
&\leq C \epsilon^2 \|F\|_{L^2}^2
\end{aligned}\]

We used the fact that $\widehat{\vartheta}$ is Lipschitz in the first inequality, Young's inequality in the next inequality,  a substitution in the next, and finally the fact that $\|A\|_{H^{6}} \leq C_1.$\qed

\end{proof}

\section{Acknowledgements} 

The authors' research was supported in part by the NSF through grant DMS-1311553.  CEW also thanks G. Schneider for many discussions about the derivation and justification of amplitude and modulation equations.

\bibliographystyle{abbrv}
\bibliography{MyBib}{}

\end{document}